	\newcommand{\ftn}[3]{ #1 : #2 \rightarrow #3 }
		\newcommand{\setof}[2]{\ensuremath{\left\{ #1 \: : \: #2 \right\}}}
	\newcommand{\id}{\ensuremath{\operatorname{id}}}
		\newcommand{\Z}{\ensuremath{\mathbb{Z}}}
	\newcommand{\C}{\ensuremath{\mathbb{C}}}
	\newcommand{\N}{\ensuremath{\mathbb{N}}}
	\theoremstyle{plain}
	\newtheorem{thm}{Theorem}[section]
	\newtheorem{lemma}[thm]{Lemma}
	\newtheorem{theorem}[thm]{Theorem}
	\newtheorem{proposition}[thm]{Proposition}
	\newtheorem{corollary}[thm]{Corollary}
	\theoremstyle{definition}
	\newtheorem{definition}[thm]{Definition}
	\newtheorem{remark}[thm]{Remark}
	\numberwithin{equation}{section}
	\numberwithin{figure}{section}
\begin{document}
	\title{Non-stable K-theory for Leavitt path algebras}

	\author[D.~Hay]{Damon Hay}
	\address{Department of Mathematics\\Sam Houston State University\\
Hunstville, Texas\\
77341, USA}
        \email{dhay@shsu.edu}

	\author[M.~Loving]{Marissa Loving}
        \address{Department of Mathematics\\University of Hawaii,
Hilo\\200 W. Kawili St.\\
Hilo, Hawaii\\
96720 USA}
	\email{loving7@hawaii.edu}

	  \author[M.~Montgomery]{Martin Montgomery}
        \address{Department of Mathematics\\University of Kentucky\\
Lexington, KY\\
40506, USA}
        \email{martin.montgomery@uky.edu}

	\author[E.~Ruiz]{Efren Ruiz}
        \address{Department of Mathematics\\University of Hawaii,
	Hilo\\200 W. Kawili St.\\
Hilo, Hawaii\\
96720 USA}
\email{ruize@hawaii.edu}

\author[K.~Todd]{Katherine Todd}
        \address{Department of Mathematics\\Brandeis University \\ Goldsmith 218, MS 050 \\ 415 South Street \\ Waltham, Massachusetts \\ 
02453 USA}
        \email{ktodd@brandeis.edu}

        \date{\today}
	

	\keywords{Graph algebras, Leavitt path algebras}
	\subjclass[2000]{Primary: 16B99 Secondary: 46L35}

	\begin{abstract}
	We compute the monoid $\mathcal{V} [ L_{K} (E) ]$ of isomorphism classes of finitely generated projective modules of a Leavitt path algebra over an arbitrary directed graph.  Our result generalizes the result of Ara, Moreno, and Pardo in which they computed the monoid $\mathcal{V} [ L_{K} (E) ]$ of a Leavitt path algebra over a countable row-finite directed graph.
	\end{abstract}

        \maketitle

\section{Introduction}

In \cite{gaga:leavittpath1}, G.~Abrams and G.~Aranda Pino introduced a class of algebras over a field $K$, which they constructed from directed graphs called \emph{Leavitt path algebras}.  The definition in \cite{gaga:leavittpath1} was for countable row-finite directed graphs, but they later extended the definition in \cite{gaga:leavittpath2} to all countable directed graphs.  K.R.~Goodearl in \cite{kg:leavittlimits} extended the notion of Leavitt path algebras $L_{K} (E)$ to all (possibly uncountable) directed graphs $E$.  Leavitt path algebras are generalization of the Leavitt algebras $L(1,n)$ of \cite{wl:modtype} and also contain many interesting classes of algebras (see \cite{gaga:leavittpath1} and \cite{gaga:leavittpath2}).  Moreover, there are nice relationships between the class of Leavitt path algebras and their analytic counterparts, graph $C^{*}$-algebras (see \cite{raeburn} for the definition of graph $C^{*}$-algebras).  In particular, M.~Tomforde showed in \cite{mt:idealst} that for any countable directed graph $E$, we have that $L_{\C} (E)$ is $*$-isomorphic to a dense sub-algebra of $C^{*} (E)$.  Also, G.~Abrams and M.~Tomforde showed in \cite{gamt:isomorita} that for countable directed graphs $E$ and $F$ with no cycles, $L_{\C} (E)$ and $L_{\C} (F)$ are Morita equivalent if and only if $C^{*} (E)$ and $C^{*} (F)$ are strongly Morita equivalent.

The monoid $\mathcal{V} [A]$ of a $C^{*}$-algebra $A$ or a $K$-algebra has played an extremely important role in the theories of both structures.  For example, $\mathcal{V} [A]$ can be used to classify direct limits of finite dimensional $C^{*}$-algebras or classify direct limits of finite dimensional $\C$-algebras (see \cite{af}).  Structural properties of the algebra and its projective modules are reflected in the structure of $\mathcal{V}[A]$.  This machinery has become known as \emph{non-stable $K$-theory}  (see \cite{bb:non-stable-K-theory}, \cite{agop:exhangerings}, and \cite{paaf-direct-sum}).  Other important properties of $A$ such as the lattice of (closed) two-sided  ideals of $A$ is encoded in $\mathcal{V} [A]$ (see Proposition~IV.5.1 of \cite{DavidsonBook}, Theorem~2.1 of \cite{affh-Projective-Modules}, and Theorem~5.3 of \cite{amp:nonstablekthy}).

In \cite{amp:nonstablekthy}, P.~Ara, M.A.~Moreno, and E.~Pardo computed the monoid $\mathcal{V}[L_K(E) ]$ of isomorphism classes of finitely generated projective modules over $L_{K} (E)$ associated with a countable row-finite directed graph $E$. They showed that $\mathcal{V}[L_K(E)]$ is naturally isomorphic to a universal abelian monoid $M_E$.  The monoid $M_{E}$ is isomorphic to $F_{E} /\sim$, where $F_{E}$ is the free abelian monoid on the set of vertices of $E$, and $\sim$ is a certain congruence on $F_{E}$ defined by $E$.  A consequence of their result is that the natural inclusion $L_\C(E) \rightarrow C^*(E)$ induces a monoid isomorphism $\mathcal{V}[L_\C(E)] \rightarrow \mathcal{V}[C^*(E)]$.  As a result, there is a natural isomorphism between the monoid $M_E$ and $\mathcal{V}[C^*(E)]$.  Their result together with the $K$-theory computation of $C^{*} (E)$ given in \cite{irws-Cuntz-Krieger} completely describes the ordered $K$-theory of $C^{*} (E)$ for a countable row-finite directed graph $E$.  Another consequence of this isomorphism is that $C^*(E)$ has stable weak cancellation or, equivalently, $C^*(E)$ is separative.  The fourth named author together with S.~Eilers and G.~Restorff in \cite{err:fullext} used this fact to give a $K$-theoretical description of when an extension of a graph $C^{*}$-algebra is a full extension.  Also, together with S.~Arklint, the fourth named author in \cite{ar-corners-ck-algs} used the monoid isomorphism $M_{E} \cong \mathcal{V} [ C^{*} (E) ]$ to prove permanence properties for graph $C^{*}$-algebras associated to finite graphs.

The objective of this paper is to compute $\mathcal{V}[L_K(E)]$ for an arbitrary directed graph $E$.  A consequence of this computation is that the natural inclusion $L_\C(E) \rightarrow C^*(E)$ induces a monoid isomorphism $\mathcal{V}[L_\C(E)] \rightarrow \mathcal{V}[C^*(E)]$ for every countable directed graph $E$.  Following Ara, Moreno, and Pardo in \cite{amp:nonstablekthy}, we define a universal abelian monoid $\overline{M}_{E}$ and prove that $\overline{M}_{E}$ is naturally isomorphic to $\mathcal{V} [ L_{K} (E) ]$.  The monoid $\overline{M}_{E}$ is defined as follows:  Let $E^{0}$ be the set of vertices of $E$ and let $\mathcal{S}$ be the set consisting of $a_{v, S}$, one for each infinite emitter $v \in E^{0}$ and finite non-empty subset $S$ of edges with source $v$.  Then $\overline{M}_{E} = F_{ E^{0} \cup \mathcal{S} } / \sim$, where $F_{ E^{0} \cup \mathcal{S} }$ is the free abelian monoid on $E^{0} \cup \mathcal{S}$  and $\sim$ is a certain congruence on $F_{ E^{0} \cup \mathcal{S} }$ defined by $E$.  In the case that $E$ is row-finite, we have that $M_{E} = \overline{M}_{E}$.  The monoid isomorphism $\overline{M}_{E} \cong \mathcal{V} [ C^{*} (E) ]$ can be used to reprove the results in \cite{mt:orderkthy}, where the ordered $K_{0}$-group of $C^{*} (E)$ was computed for a countable directed graph $E$.  The authors have recently been informed by S.~Eilers and T.~Katsura that they have independently proved Theorem~\ref{t:iso-countable} in their study of semi-projective graph $C^{*}$-algebras \cite{setk:semiproj}.  We expect that the monoid isomorphisms  $\overline{M}_{E} \cong \mathcal{V} [ C^{*} (E) ] \cong \mathcal{V} [ L_{K} (E) ]$ can be used to study other structural properties of $C^{*}(E)$ and $L_{K} (E)$. 

The main result of the paper, $\overline{M}_{E}$ is naturally isomorphic to $\mathcal{V} [ L_{E} ( E ) ]$ for an arbitrary directed graph $E$, is proved in two steps.  The first step is to prove $\overline{M}_{E}$ is naturally isomorphic to $\mathcal{V}[L_K(E)]$ in the case that $E$ is a countable directed graph.  This is done by reducing the problem to the row-finite case using the Drinen-Tomforde Disingularization of $E$.  The next step is to use the fact that every directed graph is the limit of countable directed graphs to reduce the general case to the countable case.  Along the way, it is shown that $\overline{M}_{E}$ is a continuous functor from $\textbf{CKGr}$ to $\textbf{CMon}_{0}$.  Here $\textbf{CKGr}$ is the category whose objects are directed graphs and whose set of morphisms are CK-morphisms (as defined in \cite{kg:leavittlimits}) and $\textbf{CMon}_{0}$ is the category whose objects are abelian monoids and whose set of morphisms are monoid homomorphisms that preserve the identity element.      
 
\section{Definitions}\label{definitions}

A (directed) graph $E=(E^0, E^1, r_{E}, s_{E})$ consists of a set
$E^0$ of vertices, a set $E^1$ of edges, and maps $r_{E},s_{E}: E^1
\rightarrow E^0$ identifying the range and source of each edge.  A graph $E$ is \emph{countable} if $E^{0}$ and $E^{1}$ are countable sets.  A
vertex $v \in E^0$ is called a \emph{sink} if $|s_{E}^{-1}( v )|=0$, and $v$
is called an \emph{infinite emitter} if $|s_{E}^{-1}(v )|=\infty$. A graph
$E$ is said to be \emph{row-finite} if it has no infinite emitters. If
$v$ is either a sink or an infinite emitter, then we call $v$ a
\emph{singular vertex}.  We write $E^0_\textnormal{sing}$ for the set
of singular vertices.  Vertices that are not singular vertices are
called \emph{regular vertices} and we write $E^0_\textnormal{reg}$ for
the set of regular vertices.  

\subsection{Leavitt path algebras}
 
We now recall the definition of Leavitt path algebras given in \cite{kg:leavittlimits}.  Let $E$ be a graph.  The \emph{path algebra of $E$ over $K$}, denoted by $KE$, is the $K$-algebra based on the vector space over $K$ with basis the set of all paths in $E$, and with multiplication induced by concatenation of paths: $p = e_{1} e_{2} \cdots e_{n}$ and $q = f_{1} f_{2} \cdots f_{m}$ are paths in $E$, their product in $KE$ is given by 
\begin{align*}
pq = 
\begin{cases}
e_{1} e_{2} \cdots e_{n} f_{1} f_{2} \cdots f_{m}, &\text{if $r_{E} ( e_{n} ) = s_{E} ( f_{1} )$} \\
0,		&\text{otherwise}.
\end{cases}
\end{align*} 
Note that $KE$ is the $K$-algebra presented by generators from $E^{0} \sqcup E^{1}$ with the following relations
\begin{itemize}
\item[(1)] $v^{2} = v$ for all $v \in E^{0}$; 

\item[(2)] $vw = \delta_{v,w} v$ for all $v ,w \in E^{0}$; and 

\item[(3)] $s_{E}(e)e = er_{E}(e) = e$ for all $e \in E^{1}$.
\end{itemize}

Let $E$ be a graph.  The \emph{dual graph of $E$} is a graph $E^{*}$ consisting of the same vertices as $E$ but with all edges reverse.  Then the \emph{double} or \emph{(extended graph)} of $E$, denoted by $\widehat{E}$, is the union of $E$ and $E^{*}$.

\begin{definition}[Definition~1.4 of \cite{kg:leavittlimits}]
Let $E = ( E^{0} , E^{1} , r_{E} , s_{E} )$ be a graph and let $K$ be a field.  The \emph{Leavitt path algebra of $E$ over $K$}, denoted by $L_{K} (E)$, is the quotient of $K \widehat{E}$ modulo the ideal generated by the following elements:
\begin{itemize}
\item[(1)] $e^{*} e - r_{E} (e)$ for all $e \in E^{1}$;

\item[(2)] $e^{*} f$ for all distinct $e, f \in E^{1}$; and 

\item[(3)] $v - \sum_{ e \in s_{E}^{-1} (  v  ) } ee^{*}$ whenever $v \in E_{ \mathrm{reg} }^{0}$.
\end{itemize}  
\end{definition}

The above definition coincides with the definition given in \cite{gaga:leavittpath1} (see Definition~1.3 of  \cite{gaga:leavittpath1}) for countable row-finite graphs.    
 
\begin{definition}[Definition~2.4 of \cite{mt:leavitt-commutative}]
Let $E = ( E^{0} , E^{1} , r_{E} , s_{E} )$ be a graph and let $R$ be a ring.  A collection $\setof{ P_{v}, S_{e}, S_{e^{*}} }{ v \in E^{0}, e\in E^{1} } \subseteq R$ is a \emph{Leavitt $E$-family in $R$} if $\setof{ P_{v} }{ v \in E^{0} }$ consists of pairwise orthogonal idempotents and the following conditions are satisfied: 
\begin{itemize}
\item[(1)] $P_{s_{E}(e)}S_{e} = S_{e}P_{ r_{E}(e) } = S_{e}$ for all $e \in E^{1}$; 
\item[(2)] $P_{r_{E}(e) } S_{e^{*}} = S_{e^{*}} P_{ s_{E}(e)} = S_{e^{*}}$ for all $e \in E^{1}$; 
\item[(3)] $S_{e^{*}} S_{ f } = \delta_{e,f} P_{ r_{E}(e) }$ for all $e, f \in E^{1}$; and 
\item[(4)] $P_{v} = \sum_{ e \in s_{E}^{-1} (  v  ) } S_{e}S_{e^{*}}$ whenever $v \in E_{ \mathrm{reg} }^{0}$. 
\end{itemize}
\end{definition} 
  
\begin{remark}
Let $E$ be a graph.  Note that if $A$ is a $K$-algebra and 
\begin{align*}
\setof{ P_{v}, S_{e}, S_{e^{*}} }{ v \in E^{0}, e\in E^{1} }
\end{align*}
is a Leavitt $E$-family in $A$, then there exists a $K$-algebra homomorphism $\ftn{ \phi }{ L_{K} (E) }{A}$ such that 
\begin{align*}
\phi ( v ) = P_{v}, \quad \phi ( e ) = S_{e} , \quad \phi ( e^{*} ) = S_{e^{*}}.
\end{align*}
Hence, $L_{K} (E)$ is the universal $K$-algebra generated by a Leavitt $E$-family.  Therefore, the definition of Leavitt path algebras for arbitrary graphs given here coincides with the definition given in \cite{gaga:leavittpath2} (see Definition~1.1 of  \cite{gaga:leavittpath2}) for countable graphs.
\end{remark} 
 
\begin{remark}
Let $E$ be a graph and let $K$ be a field.  We will denote the generators of $L_{K} (E)$ by
\begin{align*}
\setof{ p_{v} , t_{e} }{ v \in E^{0} , e \in E^{1} }.
\end{align*}
Although this is not the common notation used in the literature, we find it more convenient to distinguish the generators of $L_{K} (E)$ from the vertices and edges.
\end{remark}

\subsection{Abelian monoids associated to directed graphs}

Let $\textbf{CMon}_{0}$ be the category whose objects are abelian monoids and with morphisms that preserve the identity element. 

\begin{definition}
Let $R$ be a ring.  Let $\mathsf{M}_{\infty} ( R )$ be the ring $\bigcup_{ n = 1}^{ \infty } \mathsf{M}_{n} ( R )$ where we identify $\mathsf{M}_{n} (  R ) \subseteq \mathsf{M}_{n+1} ( R )$ by the homomorphism 
\begin{align*}
a \mapsto \begin{pmatrix} a & 0 \\ 0 & 0 \end{pmatrix} = a \oplus 0.
\end{align*}
Let $e, f \in \mathsf{M}_{\infty} (R)$ be idempotents.  We write $e \sim f$ if there exist $x , y \in \mathsf{M}_{\infty} (R)$ such that 
\begin{align*}
e = xy \quad \text{and} \quad y x = f.
\end{align*}
Define $\mathcal{V} [R]$ to be the monoid $\setof{ [ e ] }{ \text{$e$ an idempotent in $\mathsf{M}_{\infty} (R)$} }$ with addition defined as
\begin{align*}
[e] + [ f ] = \left[ e \oplus f \right].
\end{align*}
\end{definition}

\begin{definition}
Let $E = ( E^{0} , E^{1}, r_{E} , s_{E} )$ be a graph.  Set $\overline{M}_{E}$ to be the abelian monoid generated by 
\begin{align*}
\setof{ a_{v} }{v \in E^{0} } \cup \setof{ a_{v,S} }{ \text{$v$ an infinite emitter and $S$ a non-empty finite subset of $s_{E}^{-1}( v )$ } }
\end{align*}
with the relation given by 
\begin{align*}
a_{v} = \sum_{ e \in s_{E}^{-1} (  v  ) } a_{r_{E}(e)}
\end{align*} 
if $v$ is not a singular vertex and 
\begin{align*}
a_{v,S} + \sum_{ e \in S } b_{r_{E}(e)} = a_{v} \quad \text{and} \quad a_{v,S} + \sum_{ e \in S \setminus T }  a_{ r_{E} (e) }= a_{v,T} + \sum_{ e \in T\setminus S } a_{ r_{E} (e) }
\end{align*}
for all non-empty finite subsets, $S$ and $T$, of $s_{E}^{-1} (  v  )$ if $v$ is an infinite emitter.
\end{definition}

Ara, Moreno, and Pardo in \cite{amp:nonstablekthy} associated to every countable row-finite graph $E$ a monoid $M_{E}$ generated by $\setof{ a_{v} }{ v \in E^{0} }$ satisfying the relation
\begin{align*}
a_{v} = \sum_{ e \in s_{E}^{-1} (  v  ) } a_{ r_{E} ( e ) }
\end{align*}
for every $v \in E^{0}$ with $| s_{E}^{-1} (  v  ) | \neq 0$.  Moreover, they proved the following.

\begin{theorem}[Theorem~3.5 of \cite{amp:nonstablekthy}]\label{t:rowfinite}
Let $E$ be a countable row-finite graph, let $K$ be a field, and let 
\begin{align*}
\setof{ p_{v} , t_{e}, t_{e}^{*} }{ v \in E^{0} , e \in E^{1} }
\end{align*}
be a Leavitt $E$-family generating $L_{K} (E)$.  Then there exists a monoid isomorphism 
\begin{align*}
\ftn{ \gamma_{E} }{ M_{E} }{ \mathcal{V} [ L_{K} ( E ) ] }
\end{align*}
such that $\gamma_{E} ( [ a_{v}] ) = [ p_{v} ]$ for all $v \in E^{0}$.
\end{theorem}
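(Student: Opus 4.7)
The plan is to construct $\gamma_E$ at the level of generators and then verify well-definedness, surjectivity, and injectivity in turn.

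For well-definedness, the approach is to begin with the tautological map $F_{E^{0}} \to \mathcal{V}[L_{K}(E)]$ sending $a_v \mapsto [p_v]$ and to check that the defining relations of $M_E$ descend. Since $E$ is row-finite, the only relations are $a_v = \sum_{e \in s_E^{-1}(v)} a_{r_E(e)}$ for regular $v$. The Cuntz--Krieger relation yields $p_v = \sum_{e \in s_E^{-1}(v)} t_e t_e^{*}$, and because $t_e^{*} t_f = \delta_{e,f} p_{r_E(e)}$ the summands are pairwise orthogonal idempotents. Moreover $t_e t_e^{*} \sim t_e^{*} t_e = p_{r_E(e)}$ via the pair $(x,y) = (t_e, t_e^{*})$. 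Hence $[p_v] = \sum_e [p_{r_E(e)}]$ in $\mathcal{V}[L_{K}(E)]$, so $\gamma_E$ descends to a monoid homomorphism on $M_E$.

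For surjectivity, I would exploit that $L_{K}(E)$ is a ring with local units given by the finite sums $p_F = \sum_{v \in F} p_v$. Any idempotent $e \in \mathsf{M}_n(L_{K}(E))$ is therefore supported in some corner $P_F \mathsf{M}_n(L_{K}(E)) P_F$ with $P_F = \mathrm{diag}(p_F, \ldots, p_F)$, and a refinement argument exploiting the $\ZZ$-grading of $L_{K}(E)$ decomposes any such corner idempotent into a sum equivalent to a diagonal of single vertex projections. Consequently every class in $\mathcal{V}[L_{K}(E)]$ lies in the image of $\gamma_E$.

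Injectivity is the main obstacle, and the plan is twofold. On the combinatorial side, show that the rewriting $a_v \mapsto \sum_{e \in s_E^{-1}(v)} a_{r_E(e)}$ on $F_{E^{0}}$ is confluent, giving a tractable normal-form test for equality in $M_E$. On the algebraic side, write $E$ as a directed union of its finite complete subgraphs $F$, so that $L_{K}(E) = \varinjlim L_{K}(F)$, and observe that both $F \mapsto M_F$ and $R \mapsto \mathcal{V}[R]$ send this system to a direct limit in $\mathbf{CMon}_0$. This reduces injectivity to the case of a finite row-finite graph $F$, where the structure of $L_{K}(F)$ is concrete enough (via iterated matrix algebras over Leavitt algebras $L(1,n)$, with matrix blocks over $K$ at sinks) that any Murray--von Neumann equivalence between sums of vertex projections can be matched step by step with the defining relations of $M_F$. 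The finite case is the delicate piece, since it is there that the universal property of $L_{K}(F)$ must be leveraged to produce exactly the monoid relations of $M_F$ and no others.
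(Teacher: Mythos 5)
You should first be aware that this paper does not prove Theorem~\ref{t:rowfinite} at all: it is imported verbatim as Theorem~3.5 of Ara--Moreno--Pardo \cite{amp:nonstablekthy} and used as a black box (the present paper only establishes the easy direction, well-definedness of the natural map, in Lemma~\ref{l:naturalhom}). So your proposal must be measured against the proof in that reference rather than against anything internal to this paper. Your well-definedness argument is correct and coincides with the row-finite part of Lemma~\ref{l:naturalhom}, and your reduction of the countable row-finite case to finite graphs via complete subgraphs and continuity of both functors is also the right move and matches the reference.

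The genuine gap is in the finite case, which you correctly flag as ``the delicate piece'' but then propose to resolve by appeal to a structure theorem that is false: the Leavitt path algebra of a finite graph is \emph{not} in general an iterated matrix algebra over Leavitt algebras $L(1,n)$ with matrix blocks over $K$ at sinks. That description holds only for very restricted graphs; already for a graph with two vertices, a loop at each, and edges in both directions between them, $L_{K}(F)$ is a purely infinite simple algebra admitting no such decomposition. Consequently the proposed step-by-step matching of Murray--von Neumann equivalences with the relations of $M_{F}$ has no foundation. The surjectivity step is likewise asserted rather than proved: idempotents in $\mathsf{M}_{n} ( L_{K}(F) )$ need not be homogeneous, and showing that every finitely generated projective is a direct sum of the modules $L_{K}(F)p_{v}$ is essentially the content of the theorem, not a ``refinement argument'' one can wave at via the $\mathbb{Z}$-grading. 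What actually closes both gaps in \cite{amp:nonstablekthy} is Bergman's machinery of universal ring constructions: $L_{K}(F)$ is obtained from the semisimple algebra $K^{F^{0}}$ by successively adjoining universal isomorphisms between the finitely generated projectives $P_{v}$ and $\bigoplus_{e \in s_{F}^{-1}(v)} P_{r_{F}(e)}$, and Bergman's theorem computes $\mathcal{V}$ of such a construction as the corresponding quotient monoid, yielding $\mathcal{V}[ L_{K}(F) ] \cong M_{F}$ (surjectivity and injectivity simultaneously). The confluence lemma you invoke is indeed an ingredient of that paper, but it is used there to establish refinement and separativity of $M_{E}$, not to prove the isomorphism itself. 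Without Bergman's theorem, or a fully worked-out substitute for it, your outline does not constitute a proof.
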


\begin{remark}
It turns out that the techniques in \cite{amp:nonstablekthy} can be used for Leavitt path algebras $L_{K} (E)$ for an uncountable row-finite graph.   Since their results precede the notion of Leavitt path algebras for uncountable graphs, Ara, Moreno, and Pardo in \cite{amp:nonstablekthy} implicitly assumed that all graphs are countable.  Thus, we explicitly state the fact that $E$ is a countable row-finite graph here.
\end{remark}

The purpose of the paper is to generalize their results to arbitrary graphs.  We first prove that there is a monoid morphism $\gamma_{E}$ from $\overline{M}_{E}$ to $\mathcal{V} [ L_{K} (E) ]$ for arbitrary graphs.

\begin{lemma}\label{l:naturalhom}
Let $E = ( E^{0} ,  E^{1} , r_{E} , s_{E} )$ be a graph, let $K$ be a field, and let 
\begin{align*}
\setof{ p_{v} , t_{e}, t_{e}^{*} }{ v \in E^{0}, e \in E^{1} }
\end{align*}
be a Leavitt $E$-family generating $L_{K} ( E )$.  Then there exists a monoid morphism $\ftn{ \gamma_{E} }{ \overline{M}_{E} }{ \mathcal{V} [ L_{K} ( E ) ] }$ such that $\gamma_{E} ( [ a_{v} ] ) = [ p_{v} ]$ and $\gamma_{E} (  [ a_{v, S } ] ) = [ p_{v}  - \sum_{ e \in S} t_{e} t_{e}^{*}  ]$ when $v$ is an infinite emitter with $S$ a non-empty finite subset of $s^{-1}_{E} ( v  )$. 
\end{lemma}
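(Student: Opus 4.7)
The plan is to use the universal property of $\overline{M}_{E}$: since $\overline{M}_{E}$ is defined by generators and relations, it suffices to define $\gamma_{E}$ on the generating set $\{a_{v}\} \cup \{a_{v,S}\}$ in the prescribed way and then verify that each defining relation of $\overline{M}_{E}$ is sent to a valid equality in $\mathcal{V}[L_{K}(E)]$. Before anything else I would check that the proposed image of $a_{v,S}$ is really the class of an idempotent: because the relations $t_{e}^{*} t_{f} = \delta_{e,f} p_{r_{E}(e)}$ and $p_{s_{E}(e)} t_{e} = t_{e} = t_{e} p_{r_{E}(e)}$ hold in any Leavitt $E$-family, the elements $t_{e} t_{e}^{*}$ for $e \in s_{E}^{-1}(v)$ are pairwise orthogonal idempotents dominated by $p_{v}$, so $p_{v} - \sum_{e \in S} t_{e} t_{e}^{*}$ is an idempotent orthogonal to each $t_{e} t_{e}^{*}$ with $e \in S$. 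I would also record the basic fact that $[p_{r_{E}(e)}] = [t_{e} t_{e}^{*}]$ in $\mathcal{V}[L_{K}(E)]$, witnessed by $x = t_{e}^{*}$, $y = t_{e}$, via $xy = p_{r_{E}(e)}$ and $yx = t_{e} t_{e}^{*}$.

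With these preliminaries in place, the three families of relations are checked directly. For a regular vertex $v$, the identity $p_{v} = \sum_{e \in s_{E}^{-1}(v)} t_{e} t_{e}^{*}$ of orthogonal idempotents gives
\begin{equation*}
\gamma_{E}(a_{v}) = [p_{v}] = \sum_{e \in s_{E}^{-1}(v)} [t_{e} t_{e}^{*}] = \sum_{e \in s_{E}^{-1}(v)} [p_{r_{E}(e)}] = \sum_{e \in s_{E}^{-1}(v)} \gamma_{E}(a_{r_{E}(e)}),
\end{equation*}
using that a sum of orthogonal idempotents is the $\mathcal{V}$-class of their sum. For an infinite emitter $v$ and a finite nonempty $S \subseteq s_{E}^{-1}(v)$, the same orthogonality yields
\begin{equation*}
\gamma_{E}(a_{v,S}) + \sum_{e \in S} \gamma_{E}(a_{r_{E}(e)}) = \Bigl[p_{v} - \sum_{e \in S} t_{e} t_{e}^{*}\Bigr] + \sum_{e \in S} [t_{e} t_{e}^{*}] = [p_{v}] = \gamma_{E}(a_{v}).
\end{equation*}

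For the symmetric relation, with $S, T$ finite nonempty subsets of $s_{E}^{-1}(v)$, I would split $S = (S \cap T) \sqcup (S \setminus T)$ and likewise for $T$, and observe that the idempotent $p_{v} - \sum_{e \in S \cap T} t_{e} t_{e}^{*}$ is orthogonal to each $t_{e} t_{e}^{*}$ with $e \in (S \setminus T) \cup (T \setminus S)$; computing in $\mathcal{V}[L_{K}(E)]$,
\begin{equation*}
\gamma_{E}(a_{v,S}) + \sum_{e \in S \setminus T} \gamma_{E}(a_{r_{E}(e)}) = \Bigl[p_{v} - \sum_{e \in S \cap T} t_{e} t_{e}^{*}\Bigr] = \gamma_{E}(a_{v,T}) + \sum_{e \in T \setminus S} \gamma_{E}(a_{r_{E}(e)}),
\end{equation*}
since removing the indices in $S \setminus T$ from $S$ (respectively $T \setminus S$ from $T$) leaves $S \cap T$ in both cases. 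Because all relations are verified, the universal property gives the desired monoid morphism.

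The only real obstacle is keeping the bookkeeping of set-theoretic decompositions straight so that one genuinely lands on the same idempotent $p_{v} - \sum_{e \in S \cap T} t_{e} t_{e}^{*}$ from both sides; the rest is a straightforward translation of the relations in a Leavitt $E$-family into equalities of idempotent classes, exploiting orthogonality to replace additions in $\mathcal{V}[L_{K}(E)]$ by sums of idempotents in $L_{K}(E)$.
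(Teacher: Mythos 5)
Your proposal is correct and follows essentially the same route as the paper: define $\gamma_{E}$ on the generators and verify the three defining relations of $\overline{M}_{E}$ using $[t_{e}t_{e}^{*}] = [t_{e}^{*}t_{e}] = [p_{r_{E}(e)}]$ together with additivity of $[\,\cdot\,]$ over sums of orthogonal idempotents. The only (harmless) difference is that for the relation comparing $S$ and $T$ you meet in the middle at the idempotent $p_{v} - \sum_{e \in S \cap T} t_{e}t_{e}^{*}$, whereas the paper's computation passes through $p_{v} - \sum_{e \in S \cup T} t_{e}t_{e}^{*}$; both bookkeepings are valid.
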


\begin{proof}
Let $v \in E^0$ be a finite emitter. Note that
\begin{align*}
\gamma_E(a_v) &= [p_v] = \left[\sum_{e \in s_E^{-1}(v)} t_et_e^*\right] = \sum_{e \in s_E^{-1}(v)} [t_et_e^*] = \sum_{e \in s_E^{-1}(v)} [t_e^*t_e] = \sum_{e \in  s_E^{-1}(v)} [p_{r_E(e)}]\\
&= \sum_{e \in  s_E^{-1}(v)} \gamma_E(a_{r_E(e) }).
\end{align*}

Let $v \in E^0$ be an infinite emitter and $S$ be a non-empty finite subset of $s_E^{-1}(v)$.  Note that
\begin{align*}
\gamma_E(a_{v}) &= [p_v] = \left[p_v - \sum_{e \in S} t_et_e^* + \sum_{e \in S}t_et_e^* \right] = \left[p_v - \sum_{e \in S} t_et_e^*\right] + \left[\sum_{e \in S}t_et_e^* \right] \\
&= \left[p_v - \sum_{e \in S} t_et_e^*\right] + \sum_{e \in S} [t_et_e^*] = \left[p_v - \sum_{e \in S} t_et_e^*\right] + \sum_{e \in S} [t_e^*t_e]\\
&= \left[p_v - \sum_{e \in S} t_et_e^*\right] + \sum_{e \in S} [p_{r_E(e)} ] = \gamma_E(a_{v,S}) + \sum_{e \in S} \gamma_E(a_{r_E(e)}).
\end{align*}

Now let $T$ be a non-empty finite subset of $s_E^{-1}(v)$.  Then
\begin{align*}
\gamma_E(a_{v,S}) + \sum_{e \in S \setminus T} \gamma_E(a_{r_E(e)}) &= \left[p_v - \sum_{e \in S}t_et_e^*\right] + \sum_{e \in S \setminus T} [p_{r_E(e)}] \\
&=\left[p_v - \sum_{e \in S \cup T} t_et_e^* + \sum_{e \in T \setminus S} t_et_e^* \right] + \sum_{e \in S \setminus T} [t_e^*t_e]\\
&= \left[p_v - \sum_{e \in S \cup T} t_et_e^* \right] + \left[\sum_{e \in T \setminus S} t_et_e^* \right] + \sum_{e \in S \setminus T} [t_et_e^*]\\
&= \left[p_v - \sum_{e \in S \cup T} t_et_e^* \right] + \sum_{e \in T \setminus S} [t_et_e^* ] + \left[\sum_{e \in S \setminus T} t_et_e^*\right]\\
&= \left[p_v - \sum_{e \in S \cup T } t_et_e^* + \sum_{e \in S \setminus T} t_et_e^*\right] + \sum_{e \in T \setminus S} [t_e^*t_e ] \\
&= \left[p_v - \sum_{e \in T } t_et_e^*\right] + \sum_{e \in T \setminus S} [p_{r_E(e)}]\\
&=\gamma_E(a_{v,T}) + \sum_{e \in T \setminus S} \gamma_E(a_{r_E(e)}).
\end{align*}
We have just shown that $\gamma_{E}$ is a well-defined monoid homomorphism.
\end{proof}

\begin{remark}\label{r:monoids}
Note that if $E$ is a row-finite graph, then $\overline{M}_{E} = M_{E}$ and the natural monoid isomorphism in Theorem~\ref{t:rowfinite} is the same as the monoid morphism given in Lemma~\ref{l:naturalhom}.
\end{remark}

\section{Isomorphism for countable graphs}\label{countable}

In this section, we will show that the monoid morphism $\gamma_{E}$ given in Lemma~\ref{l:naturalhom} is a monoid isomorphism for countable graphs.  In order to do this, we will first show that there exists a monoid isomorphism from $\overline{M}_{E}$ to $\overline{M}_{F}$ which respects $\gamma_{E}$ and $\gamma_{F}$, where $F$ is the disingularization of $E$.

\begin{definition}[Definition~2.2 of \cite{ddmt:arbgraph}]
Let $E = ( E^{0} , E^{1} , r_{E} , s_{E} )$ be a countable graph.   The \emph{disingularization of $E$} is the graph $F$ defined as follows:
\begin{itemize}
\item[(1)] $F^0 = \{ w_0(v) \mid v \in E^0 \} \cup \{ w_n(v) \mid v \text{ is an infinite emitter or a sink and } n \in \N\}$. 

\item[(2)] $F^1$ is the union of $\{ g_n^v \mid n \in \N_{ \geq 0 } , v \in E^0_{\mathrm{sing}} \}$ and  
\begin{align*}
\{f_n^v \mid v \in E^0, v \text{ is not a sink, and } 0 \leq n < |s_E^{-1}(v)| \}.
\end{align*}

\item[(3)] The range and source maps $r_F$ and $s_F$ is defined by:  
\begin{itemize}
\item[(a)] If $s_E^{-1}(v) = \{e_k^v \mid  0 \leq k < |s_E^{-1}(v)| \}$, then 
\begin{align*}
s_{F} ( f_{n}^{v} ) = 
\begin{cases}
w_{0} (v), &\text{if $v$ is a finite emitter} \\
w_{n} (v), &\text{if $v$ is an infinite emitter}.
\end{cases}
\end{align*}
and 
\begin{align*}
r_{F} ( f_{n}^{v} ) = w_{0} ( r_{E} ( e_{n}^{v} ) ).
\end{align*}

\item[(b)] $r_{F} ( g_{n}^{v} ) = w_{n+1} ( v )$ and $s_{F} ( g_{n}^{v} ) = w_{n} (v)$.
\end{itemize}
 \end{itemize}
 \end{definition}

\begin{proposition}\label{p:monoid-dising-iso}
Let $E$ be a countable graph. Let $F$ be the disingularization of $E$.  Then there exists a monoid isomorphism $\ftn{ \varphi_{E} }{ \overline{M}_E }{ \overline{M}_F }$.
\end{proposition}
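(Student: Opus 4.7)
The plan is to construct explicit mutually inverse monoid homomorphisms $\varphi_E \colon \overline{M}_E \to \overline{M}_F$ and $\psi_E \colon \overline{M}_F \to \overline{M}_E$. The starting point is the observation that the disingularization $F$ is row-finite and has no sinks: for each singular vertex $v \in E^0$ and each $n \ge 0$, the vertex $w_n(v) \in F^0$ emits $g_n^v$, plus $f_n^v$ when $v$ is an infinite emitter; and for each regular $v \in E^0$ the unique vertex $w_0(v)$ emits the $|s_E^{-1}(v)|$ edges $f_k^v$. In particular $\overline{M}_F = M_F$, and iterating the defining relations gives, for each infinite emitter $v$ and each $n \ge 0$,
\[
a_{w_0(v)} = \sum_{k=0}^{n-1} a_{w_0(r_E(e_k^v))} + a_{w_n(v)},
\]
and, for each sink $v$, $a_{w_0(v)} = a_{w_n(v)}$ for every $n \ge 0$.

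I would define $\varphi_E$ on generators by $\varphi_E(a_v) = a_{w_0(v)}$ for every $v \in E^0$, and, for each infinite emitter $v$ and each finite non-empty $S = \{e_{n_1}^v, \dots, e_{n_k}^v\} \subseteq s_E^{-1}(v)$, writing $N(S) = 1 + \max_i n_i$,
\[
\varphi_E(a_{v,S}) = a_{w_{N(S)}(v)} + \sum_{n \in \{0,\dots,N(S)-1\} \setminus \{n_1,\dots,n_k\}} a_{w_0(r_E(e_n^v))}.
\]
The regular-vertex relation of $\overline{M}_E$ transports to the regular-vertex relation at $w_0(v)$ in $F$; the relation $a_{v,S} + \sum_{e \in S} a_{r_E(e)} = a_v$ reduces to the iterated identity displayed above; and the compatibility relation between subsets $S,T$ of $s_E^{-1}(v)$ is handled by expanding $a_{w_{N(S)}(v)}$ and $a_{w_{N(T)}(v)}$ up to $a_{w_M(v)}$ with $M = \max(N(S), N(T))$, after which both sides reduce to the symmetric expression $a_{w_M(v)} + \sum_{n \in \{0,\dots,M-1\} \setminus \operatorname{ind}(S \cap T)} a_{w_0(r_E(e_n^v))}$.

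For the inverse, I would set $\psi_E(a_{w_0(v)}) = a_v$ for all $v \in E^0$, $\psi_E(a_{w_n(v)}) = a_{v,\{e_0^v,\dots,e_{n-1}^v\}}$ when $v$ is an infinite emitter and $n \ge 1$, and $\psi_E(a_{w_n(v)}) = a_v$ when $v$ is a sink and $n \ge 1$. The only non-trivial relation to check is the infinite-emitter relation $a_{w_n(v)} = a_{w_0(r_E(e_n^v))} + a_{w_{n+1}(v)}$ in $\overline{M}_F$, which translates under $\psi_E$ into the shift identity $a_{v,S_n} = a_{v,S_{n+1}} + a_{r_E(e_n^v)}$ with $S_n = \{e_0^v,\dots,e_{n-1}^v\}$, and this is precisely the compatibility relation in $\overline{M}_E$ applied to $(S_n,S_{n+1})$.

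Finally, $\varphi_E \psi_E = \id$ and $\psi_E \varphi_E = \id$ are verified on generators. The identity $\varphi_E \psi_E(a_{w_n(v)}) = a_{w_n(v)}$ for $v$ a sink uses the iterated sink relation $a_{w_0(v)} = a_{w_n(v)}$ in $\overline{M}_F$, and the identity $\psi_E \varphi_E(a_{v,S}) = a_{v,S}$ is precisely the compatibility relation in $\overline{M}_E$ applied to $S$ and $T = \{e_0^v,\dots,e_{N(S)-1}^v\} \supseteq S$. I expect the main obstacle to be bookkeeping rather than conceptual: since every relation unwinds explicitly in the row-finite monoid $\overline{M}_F$, the only real content is careful set-theoretic manipulation of finite subsets of $\{0,1,2,\dots\}$.
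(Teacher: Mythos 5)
Your proposal is correct and follows essentially the same route as the paper: the same explicit formulas for $\varphi_E$ and $\psi_E$ on generators (the paper writes $\varphi(a_{v,S}) = b_{w_{n+1}(v)} + \sum_{f \in \lambda(T_n^v)\setminus\lambda(S)} b_{r_F(f)}$ with $n$ the largest index occurring in $S$, which is your $N(S)=n+1$ formula), the same verification of the three relations via the iterated/recursive identity $b_{w_0(v)} = b_{w_{n+1}(v)} + \sum_{k\le n} b_{r_F(f_k^v)}$, and the same check on generators that the two maps are mutually inverse. The only minor point to make explicit in a write-up is the $n=0$ case of the relation at $w_0(v)$ for an infinite emitter, where the image under $\psi_E$ is $a_v = a_{v,\{e_0^v\}} + a_{r_E(e_0^v)}$ (the first infinite-emitter relation rather than the shift identity), which the paper treats as a separate case.
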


\begin{proof}

Let $v \in E^0$ and $n \in \N$. Let $T_{n}^{v} = \{ e_{k}^{v} \mid 0 \leq k \leq n ,\  s_{E} ( e_{k}^{v} ) = v\} $.  Define 
$$  
\lambda : \cup_{v \text{ is inf. emit.}} \cup_{n \in \N} T_{n}^{v} \rightarrow F^{0}
$$
by $\lambda(e_{n}^{v}) = f_{n}^{v}$.  Consider the following map: $\varphi: \overline{M}_E \rightarrow \overline{M}_F$ where
$$
\varphi(a_v) = b_{w_0(v)} \quad \text{ and } \quad \varphi(a_{v,S}) = b_{w_{n+1}(v)} + \sum_{f \in \lambda (T_{n}^{v}) \backslash \lambda (S)} b_{r_F(f)}. 
$$
Here $n$ is the largest number such that  $e_{n}^{v} \in S$.  We want to show that $\varphi$ respects our relations from $\overline{M}_E$. Let $v$ be a finite emitter in $E$ with $n$ edges coming out of it. Note that 
$$a_v = \sum_{e \in s_{E}^{-1}(v)} a_{r_{E}(e)}. $$
We want to show that 
$$
\varphi(a_v) =  \sum_{e \in s_{E}^{-1}(v)} \varphi (a_{r_{E}(e)}). 
$$
Note that 
\begin{align*} \varphi (a_v) = b_{w_0(v)} &= \sum_{f \in s_{F}^{-1}(w_0(v))} b_{r_{F}(f)} = \sum_{i=0}^{n-1} b_{r_{F}(f_{i}^{v})} = \sum_{i=0}^{n-1} b_{w_0(r_E(e_{i}^{v}))} = \sum_{i=0}^{n-1} \varphi (a_{r_E(e_{i}^{v})}) \\ 
&= \sum_{e \in s_{E}^{-1}(v)} \varphi (a_{r_{E}(e)}).
 \end{align*}
Let $v$ be an infinite emitter in $E$. Let $S$ be a non-empty finite subset of  $ s_{E}^{-1}(v)$. Note that $a_v = a_{v,S} + \sum_{e \in S} a_{r_{E}(e)}$.  We want to show that 
$$
\varphi (a_v) = \varphi (a_{v,S}) + \sum_{e \in S} \varphi (a_{r_{E}(e)}).
$$

Let $n$ be the greatest number such that $e_{n}^{v} \in S$.  Note that $b_{r_{F}(g_{k-1}^{v})} = b_{r_{F}(g_{k}^{v})} + b_{r_{F}(f_{k}^{v})}$ for all $k > 0$. By repeated use of the recursive formula, 
\begin{align*}
\varphi(a_v) = b_{w_0(v)} &= \sum_{f \in s_{F}^{-1}(w_0(v))} b_{r_{F}(f)} = b_{r_{F}(g_{0}^{v})} + b_{r_{F}(f_{0}^{v})} = b_{r_{F}(g_{n}^{v})} + \sum_{i=0}^{n} b_{r_{F}(f_{i}^{v})} \\
&= b_{w_{n+1}(v)} + \sum_{f \in \lambda(T_{n}^{v})} b_{r_{F}(f)}= b_{w_{n+1}(v)} + \sum_{f \in \lambda(T_{n}^{v}) \backslash \lambda(S)} b_{r_{F}(f)} + \sum_{f \in \lambda(S)} b_{r_{F}(f)} \\
&= \varphi (a_{v,S}) + \sum_{f \in \lambda(S)} b_{r_{F}(f)}= \varphi (a_{v,S}) + \sum_{e \in S} b_{w_{0}(r_{E}(e))} \\
&= \varphi (a_{v,S}) + \sum_{e \in S} \varphi (a_{r_{E}(e)}). 
\end{align*}

Let $v$ be an infinite emitter in $E$. Let $S$ and $T$ be non-empty finite subsets of  $ s_{E}^{-1}(v)$. Note that
$$a_{v,S} + \sum_{e \in S \backslash T} a_{r_{E}(e)} = a_{v,T} + \sum_{e \in T \backslash S} a_{r_{E}(e)}.$$
Let $n_S$ be the greatest number such that $e_{n_{S}}^{v} \in S$ and $n_T$ be the greatest number such that $e_{n_{T}}^{v} \in T$. Without loss of generality we can assume that $n_S \leq n_T$.  Thus, we want to show that
$$
\varphi (a_{v,S}) + \sum_{e \in S \backslash T} \varphi (a_{r_{E}(e)}) = \varphi (a_{v,T}) + \sum_{e \in T \backslash S} \varphi (a_{r_{E}(e)}).
$$
Note that
\begin{align*} \varphi (a_{v,S}) + \sum_{e \in S \backslash T} \varphi (a_{r_{E}(e)}) &= b_{w_{n_{S}+1}(v)} + \sum_{f \in \lambda(T_{n_{S}}^{v}) \backslash \lambda (S)} b_{r_{F}(f)} + \sum_{e \in S \backslash T} b_{w_{0}(r_{E}(e))} \\
&= b_{w_{n_{T}+1}(v)} + \sum_{i=n_{S}+1}^ {n_{T}} b_{r_{F}(f_{i}^{v})}  \\
&\qquad +\sum_{f \in \lambda(T_{n_{S}}^{v}) \backslash \lambda (S)} b_{r_{F}(f)} + \sum_{e \in S \backslash T} b_{w_{0}(r_{E}(e))}\\
 &= b_{w_{n_{T}+1}(v)} + \sum_{f \in \lambda(T_{n_{T}} ^{v}) \backslash \lambda (T_{n_{S}}^{v})} b_{r_{F}(f)}    \\
 &\qquad +           \sum_{f \in \lambda(T_{n_{S}}^{v}) \backslash \lambda (S)} b_{r_{F}(f)} + \sum_{f \in \lambda (S) \backslash \lambda (T)} b_{r_{F}(f)}\\
 &= b_{w_{n_{T}+1}(v)} + \sum_{f \in \lambda(T_{n_{T}} ^{v}) \backslash \lambda (S)} b_{r_{F}(f)}    + \sum_{f \in \lambda (S) \backslash \lambda (T)} b_{r_{F}(f)}\\
 &=  b_{w_{n_{T}+1}(v)} + \sum_{f \in \lambda(T_{n_{T}} ^{v}) \backslash \lambda (T)} b_{r_{F}(f)}    + \sum_{f \in \lambda (T) \backslash \lambda (S)} b_{r_{F}(f)}\\
 &=\varphi (a_{v,T}) + \sum_{e \in T \backslash S} \varphi (a_{r_{E}(e)}).
 \end{align*}
 Thus we have shown that $\varphi$ respects the given relations, which implies that $\varphi$ is a well-defined monoid morphism.

We now construct the inverse of $\varphi$.  Define $\psi: \overline{M}_F \to \overline{M}_E$ by
\begin{align*}
\psi ( b_{ w_{0} (v) } ) = a_{v}
\end{align*}
for all $v \in E^{0}$ and
\begin{align*}
\psi( b_{ w_{n} (v) } ) = 
\begin{cases}
a_{v, T_{n-1}^{v} }, &\text{if $v$ is an infinite emitter} \\
a_{v},			&\text{if $v$ is a sink}.
\end{cases}
\end{align*}
We want to show that $\psi$ respects the relations of $\overline{M}_F$, i.e., 
$$
\displaystyle b_{w_0(v)} = \sum_{s_F^{-1}(w_0(v))} b_{r_F(f)}.
$$
Hence, if $v$ is a finite emitter and not a sink where $\psi(b_{w_0(v)}) = a_v$, then  we must prove that 
$$
\psi(b_{w_0(v)})  = \sum_{f \in s_F^{-1}(w_0(v))} \psi(b_{r_F(f)}).
$$
Let $v$ be a finite emitter that is not a sink.  Note that 
$$
\psi(b_{w_0(v)}) = a_v = \sum_{e \in s_E^{-1}(v))} a_{r_E (e)} = \sum_{e \in s_E^{-1}(v)} \psi(b_{w_0(r_E(e))}) = \sum_{f \in s_F^{-1}(w_0(v))} \psi(b_F(f)).
$$
Let $v$ be an infinite emitter. Here we have two cases

\begin{itemize}
\item[(1)] For $w_0(v)$, $\psi(b_{w_0(v)}) = a_v$. Let $n \in \mathbb Z_{\geq 0}$.  
\begin{align*}
\psi ( b_{r_{F} ( g_{0}^{v} ) } ) + \psi ( b_{r_{F} ( f_{0}^{v} ) } ) &= \psi ( b_{ w_{1} ( v ) } ) + \psi ( b_{ r_{F} ( f_{0}^{v} ) } ) = a_{v, T_{0}^{v}} + \psi( b_{ w_{0} ( r_{E} ( e_{0}^{v} ) ) } ) \\
	&= a_{ v, T_{0}^{v} } + a_{ r_{E} ( e_{0}^{v} ) } = a_{v} = \psi ( b_{ w_{0} (v) } ).
\end{align*}

\item[2)] For $w_n(v)$ with $n \geq 1$, $\psi(b_{w_n(v)}) = a_{v, T_{n-1}^v}$. We want to show that $\psi$ respects the relation given by 
$$
\displaystyle b_{w_n(v)} = b_{w_{n+1}(v)} + b_{r_F (f_n^v)} = \sum_{f \in s_F^{-1}(w_n(v))} b_{r_F(f)}.
$$   
Let $n \geq 1$.  Note that 
\begin{align*}
\displaystyle \psi(b_{w_n(v)}) = a_{v, T_{n-1}^v} &= a_{v, T_n^v} + a_{r_E(e_{n}^v)} = \psi(b_{w_{n + 1}(v)}) + \psi(b_{w_0(r_E (e_{n}^v))}) \\
&= \psi(b_{w_{n+1}(v)}) + \psi( b_{r_F (f_n^v)}) = \sum_{f \in s_F^{-1}(w_n(v))} \psi(b_{r_F(f)}).
\end{align*}
\end{itemize}
Now let $v$ be a sink and $n \geq 0$. Note that
\begin{align*}
\psi(b_{w_n(v)} ) = a_v = \psi(b_{w_{n+1} (v)}).
\end{align*}
Thus we have shown that $\psi$ respects the given relations, which implies that $\psi$ is a well-defined monoid morphism.

We will now show that $\psi$ is indeed the inverse of $\varphi$.  Let $v \in E^0$.  Then
$$
\psi(\varphi(a_v)) = \psi(b_{w_0(v)}) = a_v.
$$
Consider an infinite emitter $v \in E^{0}$ and let $S$ be a non-empty finite subset of $s_{E}^{-1} (v)$. Let $n$ be the largest integer such that $e_{n}^{v} \in S$.  Note that 
\begin{align*} 
\psi(\varphi(a_{v, S})) &= \psi\left(b_{w_{n+1}(v)}+ \sum_{f \in \lambda(T_n^v)\setminus \lambda(S)} b_{r_F(f)} \right) = \psi(b_{w_{n+1}(v) }) + \sum_{f \in \lambda(T_n^v) \setminus \lambda (S)} \psi (b_{r_F (f)})\\
& = a_{v, T_n^v} + \sum_{f \in \lambda(T_n^v)\setminus \lambda (S)} \psi(b_{r_F(f)}) = a_{v, T_n^v} + \sum_{e \in T_n^v \setminus S} a_{r_E(e)} = a_{v, S}.
\end{align*}
Since $\psi \circ \varphi$ is the identity function on the generators of $\overline{M}_{E}$, we have that $\psi \circ \varphi = \id_{ \overline{M}_{E} }$.

We now show that $\varphi \circ \psi = \id_{ \overline{M}_{F} }$.  Consider $w_0(v) \in F^0$.  Note that $\varphi(\psi(b_{w_0(v)})) = \varphi(a_v) = b_{w_0(v)}$.  Consider $w_n(v) \in F^0$ where $v \in E^0$ is a sink.  Note that 
\begin{align*}
\varphi (\psi(b_{w_n(v)})) = \varphi(a_v) = b_{w_0(v)} = b_{w_n(v)}.
\end{align*}

Consider $w_n(v) \in F^0$ where $v \in E^0$ is an infinite emitter.  Note that 
\begin{align*}
\displaystyle \varphi(\psi(b_{w_n(v)})) = \varphi(a_{v, T_{n-1}^v}) = b_{w_n(v)} + \sum_{f \in \lambda (T_{n-1}^v) \setminus \lambda(T_{n-1}^v)} b_{r_F(f)} = b_{w_n(v)}.
\end{align*}
Since $\varphi \circ \psi$ is the identity function on the generators of $\overline{M}_{F}$, we have that $\varphi \circ \psi = \id_{ \overline{M}_{F} }$. 

We have just shown that $\varphi$ and $\psi$ are inverse functions.  Hence, $\varphi$ is a monoid isomorphism which implies that $\overline{M}_E \cong \overline{M}_F$.
\end{proof}

The next lemma is probably well-known to the experts in the field but we were not able to find a reference.  For the convenience of the reader we provide the proof here. 

\begin{lemma}\label{l:corner}
Let $R$ be a ring and let $e$ be an idempotent in $R$.  If $p, q$ are idempotents in $e R e$ and there exist $x, y \in R$ such that $p = xy$ and $yx = q$, then there exist $v,w \in e R e$ such that $p = vw$ and $wv = q$.  Consequently, the usual embedding $\ftn{ \iota }{ e R e }{ R }$ induces an injective monoid morphism $\ftn{ \mathcal{V} [\iota ] }{ \mathcal{V} [ e R e ] } { \mathcal{V} [ R ] }$.
\end{lemma}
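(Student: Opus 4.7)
My plan is to produce $v,w$ by compressing the given $x,y$ between the idempotents $p$ and $q$. Specifically, I would set $v := pxq$ and $w := qyp$. Since $p, q \in eRe$ means $ep = pe = p$ and $eq = qe = q$, both $v$ and $w$ automatically lie in $eRe$. The identities $vw = p$ and $wv = q$ then reduce to pure symbol-pushing using the four relations $p^2 = p$, $q^2 = q$, $xy = p$, $yx = q$; for instance, $vw = pxq^2yp = px(yx)yp = p(xy)(xy)p = p^4 = p$, and the analogous computation for $wv$ uses the derived identity $px = xq$ (the two ways of bracketing $xyx$).

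For the consequence, first note that $\iota$ manifestly preserves the Murray--von Neumann-type equivalence $\sim$, since equalities in $eRe$ remain equalities in $R$; hence $\mathcal{V}[\iota]$ is a well-defined monoid morphism. To prove injectivity, let $p, q$ be idempotents in $\mathsf{M}_{\infty}(eRe)$ whose classes coincide in $\mathcal{V}[R]$, with equivalence witnessed by some $x, y \in \mathsf{M}_{\infty}(R)$. Choose $N$ large enough that all four of $p, q, x, y$ lie in $\mathsf{M}_N(R)$. The key observation is the corner identification $\mathsf{M}_N(eRe) = E_N \mathsf{M}_N(R) E_N$, where $E_N := \diag(e, \ldots, e) \in \mathsf{M}_N(R)$ is the scalar diagonal of $e$'s. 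Applying the first half of the lemma inside the ring $\mathsf{M}_N(R)$ with idempotent $E_N$ then produces witnesses $v, w \in \mathsf{M}_N(eRe) \subseteq \mathsf{M}_{\infty}(eRe)$ of the equivalence $[p] = [q]$ in $\mathcal{V}[eRe]$, as required.

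The main obstacle is really just noticing the right compression formula; once $v = pxq$ and $w = qyp$ are written down, the verification is a short algebraic manipulation, and the matrix-corner identification $\mathsf{M}_N(eRe) = E_N \mathsf{M}_N(R) E_N$ mechanically upgrades the ring-level statement to one about $\mathcal{V}$. Thus I expect the proof to be short and essentially computational, with no further machinery needed.
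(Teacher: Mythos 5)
Your proof is correct and uses exactly the same compression $v = pxq$, $w = qyp$ as the paper, with the same short verification. The only difference is that you also spell out the matrix-corner identification $\mathsf{M}_N(eRe) = E_N \mathsf{M}_N(R) E_N$ needed for the injectivity of $\mathcal{V}[\iota]$, a detail the paper leaves implicit.
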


\begin{proof}
Suppose $p, q$ are idempotents in $e R e$ and there exist $x, y \in R$ such that $p = xy$ and $yx = q$.  Set $v = p x q$ and $w = q y p$.  Then $vw = p x q y p = p x y x y p = p p p p = p$ and $wv = qypxq = q y x y x q = q q q q = q$.  
\end{proof}

\begin{theorem}\label{t:iso-countable}
Let $E = ( E^{0} , E^{1}, r_{E} , s_{E} )$ be a countable graph, let $F = ( F^{0} , F^{1}, r_{F}, s_{F} )$ be the desingularization of $E$, and let $K$ be a field.  Let $\varphi_{E}$ be given in Proposition~\ref{p:monoid-dising-iso}.  Then there exists a homomorphism $\ftn{ \kappa_{E} }{ L_{K} ( E ) }{ L_{K} ( F ) }$ such that $\mathcal{V} [ \kappa_{E} ]$ is a monoid isomorphism and the diagram
\begin{align*}
\xymatrix{
\overline{M}_{E} \ar[r]^-{ \gamma_{E} } \ar[d]_{ \varphi_{E}  } & \mathcal{V} [ L_{K} ( E ) ] \ar[d]^{ \mathcal{V} [\kappa_{E} ] } \\
\overline{M}_{F} \ar[r]_-{ \gamma_{F} } & \mathcal{V} [ L_{K} ( F ) ]
}
\end{align*} 
is commutative.  Consequently, $\gamma_{E}$ is a monoid isomorphsm.
\end{theorem}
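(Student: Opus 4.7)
The plan is to construct $\kappa_{E}$ by exhibiting a Leavitt $E$-family inside $L_{K}(F)$ that mirrors the desingularization, then to leverage Theorem~\ref{t:rowfinite} (which applies to the row-finite graph $F$) together with Proposition~\ref{p:monoid-dising-iso} and the commutativity of the square to promote $\gamma_{E}$ to an isomorphism.

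To define $\kappa_{E}$, I would set $P_{v} := p_{w_{0}(v)}$ for every $v \in E^{0}$, and for an edge $e_{n}^{v} \in E^{1}$ declare $S_{e_{n}^{v}} := t_{f_{n}^{v}}$ when $v$ is a finite emitter and $S_{e_{n}^{v}} := t_{g_{0}^{v}} t_{g_{1}^{v}} \cdots t_{g_{n-1}^{v}} t_{f_{n}^{v}}$ when $v$ is an infinite emitter, with $S_{e^{*}} := (S_{e})^{*}$. A direct check using the Leavitt relation at each $w_{k}(v)$ (which emits exactly one or two edges in $F$) shows that $\{ P_{v}, S_{e}, S_{e^{*}}\}$ is a Leavitt $E$-family in $L_{K}(F)$, so universality furnishes the $K$-algebra homomorphism $\kappa_{E}$.

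For commutativity of the diagram, the generator $a_{v}$ is immediate: both routes return $[p_{w_{0}(v)}]$. For $a_{v,S}$ with $v$ an infinite emitter and $n$ the largest index with $e_{n}^{v} \in S$, I would expand $p_{w_{0}(v)} = t_{f_{0}^{v}}t_{f_{0}^{v}}^{*} + t_{g_{0}^{v}} t_{g_{0}^{v}}^{*}$ and iterate the Leavitt relations at $w_{1}(v), \ldots, w_{n}(v)$ to telescope
\begin{align*}
p_{w_{0}(v)} \;=\; \sum_{k=0}^{n} \kappa_{E}(t_{e_{k}^{v}} t_{e_{k}^{v}}^{*}) \;+\; t_{g_{0}^{v}}\cdots t_{g_{n}^{v}} t_{g_{n}^{v}}^{*} \cdots t_{g_{0}^{v}}^{*}.
\end{align*}
Subtracting $\sum_{e \in S}\kappa_{E}(t_{e}t_{e}^{*})$, passing to $\mathcal{V}[L_{K}(F)]$, and using $[xy] = [yx]$ to replace the trailing idempotent by $[p_{w_{n+1}(v)}]$, recovers precisely the expression for $\gamma_{F}(\varphi_{E}(a_{v,S}))$ dictated by the definition of $\varphi_{E}$.

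The main obstacle is to show $\mathcal{V}[\kappa_{E}]$ is a monoid isomorphism. For surjectivity, I would appeal to the computations in Proposition~\ref{p:monoid-dising-iso} (encoded by the inverse $\psi$) to note that every generator $b_{w_{n}(v)}$ of $\overline{M}_{F}$ lies in the image of $\varphi_{E}$; together with surjectivity of $\gamma_{F}$ this forces $\mathcal{V}[\kappa_{E}]$ to be surjective via the commutative square. For injectivity, I would first invoke the $\Z$-graded uniqueness theorem for Leavitt path algebras to conclude that $\kappa_{E}$ is injective, then identify its image with the non-unital local corner $\bigcup_{F_{0}}\bigl(\sum_{v \in F_{0}} p_{w_{0}(v)}\bigr) L_{K}(F) \bigl(\sum_{v \in F_{0}} p_{w_{0}(v)}\bigr)$, where $F_{0}$ ranges over finite subsets of $E^{0}$. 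The nontrivial inclusion here is the delicate part: every path in $F$ which ends at a vertex $w_{n}(u)$ with $n \geq 1$ must be rewritten as a product involving the idempotent $\kappa_{E}\bigl(p_{u} - \sum_{k<n} t_{e_{k}^{u}} t_{e_{k}^{u}}^{*}\bigr)$, an identity obtained by repeatedly applying the Leavitt relation at $w_{0}(u), \ldots, w_{n-1}(u)$. Granted this identification, every idempotent in $\mathsf{M}_{\infty}$ of the image sits inside $\mathsf{M}_{\infty}$ of a genuine corner $\bigl(\sum_{v \in F_{0}} p_{w_{0}(v)}\bigr) L_{K}(F) \bigl(\sum_{v \in F_{0}} p_{w_{0}(v)}\bigr)$ for some finite $F_{0}$, and Lemma~\ref{l:corner} delivers the injectivity of $\mathcal{V}[\kappa_{E}]$.

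With $\mathcal{V}[\kappa_{E}]$, $\gamma_{F}$, and $\varphi_{E}$ all monoid isomorphisms, commutativity of the square exhibits $\gamma_{E} = \mathcal{V}[\kappa_{E}]^{-1}\circ \gamma_{F} \circ \varphi_{E}$, completing the proof.
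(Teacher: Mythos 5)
Your proposal follows essentially the same route as the paper: the same Leavitt $E$-family defining $\kappa_{E}$, the same telescoping computation at the vertices $w_{k}(v)$ for commutativity of the square, surjectivity of $\mathcal{V}[\kappa_{E}]$ from the square together with the isomorphisms $\gamma_{F}$ and $\varphi_{E}$, and injectivity by locating idempotents in the corners $P_{n,F}\,L_{K}(F)\,P_{n,F}$ and applying Lemma~\ref{l:corner}. The only divergence is that where you re-derive the corner identification by hand, the paper simply cites the proof of Theorem~5.6 of \cite{gaga:leavittpath2}; if you do want to argue it directly, note that your appeal to the $\Z$-graded uniqueness theorem needs a small adjustment, since $\kappa_{E}$ sends $t_{e_{n}^{v}}$ to the degree-$(n+1)$ element $t_{g_{0}^{v}}\cdots t_{g_{n-1}^{v}}t_{f_{n}^{v}}$ and hence is not graded for the canonical grading on $L_{K}(F)$ --- one must first regrade $L_{K}(F)$ by assigning degree zero to each $t_{g_{n}^{v}}$.
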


\begin{proof}
Let $\setof{ p_{v}, t_{f}, t_{f}^{*} }{ v \in F^{0}, f \in F^{1} }$ be a set of Leavitt $F$-family generating $L_{K} ( F )$ and let $\setof{ P_{v}, T_{e}, T_{e}^{*} }{ v \in E^{0}, e \in E^{1} }$ be a set of Leavitt $E$-family generating $L_{K} ( E )$.  Let $e \in E^{1}$.  Then $e = e_{j}^{v}$ where $v = s_{E} (e)$.  If $v$ is not a singular vertex, then set $s_{e} = t_{f_{j}^{v}}$ and $s_{e}^{*} = t_{f_{j}^{v}}^{*}$.  Suppose $v$ is an infinite emitter.  Set $s_{e} = t_{\alpha_{j}} = t_{g_{0}^{v}} \cdots t_{g_{j-1}^{v}} t_{ f_{j}^{v}}$ and $s_{e}^{*} = t_{ \alpha_{j}}^{*} = t_{ f_{j}^{v} }^{*} t_{g_{j-1}^{v}}^{*} \cdots t_{g_{0}^{v}}^{*}$, where $\alpha_{j} = g_{0}^{v} g_{1}^{v} \dots g_{j-1}^{v} f_{j}^{v}$.  By Proposition~5.5 of \cite{gaga:leavittpath2}, there exists a monomorphism $\ftn{ \kappa_{E} }{ L_{K} ( E ) }{ L_{K} ( F ) }$ such that $\kappa_{ E } ( P_{v} ) = p_{w_0(v)}$ for each $v \in E^{0}$, and $\kappa_{E} ( T_{e} ) = s_{e}$ and $\kappa_{E} ( T_{e}^{*} ) = s_{e}^{*}$ for each $e \in E^{1}$.   

We now show that the diagram is commutative.  Let $v \in E^0$. Then 
\begin{align*}
\mathcal{V} [ \kappa_E ] (\gamma_E(a_v)) = \mathcal{V} [ \kappa_E ] ([P_v]) = [p_{w_0(v)}]  \quad \text{ and } \quad \gamma_F(\varphi_E(a_v)) = \gamma_F(b_{w_0(v)}) = [p_{w_0(v)}].
\end{align*}
Let $v \in E^0$ be an infinite emitter, $S$ be a non-empty finite subset of $s_E^{-1}(v)$, and $n = \max \{ k \in \Z_{ \geq 0 } : e_k^v \in S \}$. Then
\begin{align*}
&\mathcal{V} [ \kappa_E ] (\gamma_E(a_{v,S})) \\
&\qquad = \mathcal{V} [ \kappa_E ] \left(\left[P_v - \sum_{e \in S} T_eT_e^*\right]\right) \quad = \mathcal{V} [ \kappa_E ] \left( \left[P_v - \sum_{e \in T_n^v}T_eT_e^* + \sum_{e \in T_n^v \setminus S} T_e T_e^* \right] \right)\\
&\qquad = \mathcal{V} [ \kappa_E ] \left( \left[ P_v - \sum_{e \in T_n^v}T_eT_e^* \right] \right) + \mathcal{V} [\kappa_E ] \left( \left[\sum_{e \in T_n^v \setminus S}T_eT_e^* \right] \right) \\
&\qquad = \mathcal{V} [ \kappa_E ] \left( \left[ P_v - \sum_{e \in T_n^v}T_eT_e^* \right] \right) + \mathcal{V} [\kappa_E ] \left( \sum_{e \in T_n^v \setminus S} \left[ T_eT_e^* \right] \right) \\
&\qquad = \mathcal{V} [ \kappa_E ] \left( \left[ P_v - \sum_{e \in T_n^v}T_eT_e^* \right] \right) + \sum_{e \in T_n^v \setminus S} \mathcal{V} [ \kappa_E ] \left( \left[ T_eT_e^* \right] \right). 
\end{align*}
Therefore,
\begin{align*}
&\mathcal{V} [ \kappa_E ] \left( \left[ P_v - \sum_{e \in T_n^v}T_eT_e^* \right] \right) + \sum_{e \in T_n^v \setminus S} \mathcal{V} [ \kappa_E ] \left( \left[ T_eT_e^* \right] \right)  \\
&\qquad =  \left[ p_{w_0(v)} - \sum_{e \in T_n^v}s_e s_e^* \right] + \sum_{ e \in T_{n}^{v} \setminus S} [ s_{e} s_{e}^{*} ] \\
&\qquad = \left[ p_{w_{0} (v) } - t_{ f_{0}^{v} } t_{f_{0}^{v}}^{*} - \sum_{i = 1}^{n} t_{g_{0}^{v} } t_{g_{1}^{v} }  \cdots t_{g_{i-1}^{v} } t_{f_{i}^{v}} t_{f_{i}^{v}}^{*}  t_{g_{i-1}^{v} }^{*} \cdots t_{g_{0}^{v} }^{*} \right] + \sum_{e \in T_n^v \setminus S}  [s_e s_e^*]   \\
&\qquad = \left[ p_{w_{0} (v) } - t_{ f_{0}^{v} } t_{f_{0}^{v}}^{*} - \sum_{i = 1}^{n} t_{g_{0}^{v} } t_{g_{1}^{v} }  \cdots t_{g_{i-1}^{v} } ( t_{ w_{i}(v)} - t_{g_{i}^{v}} t_{g_{i}^{v}}^{*} ) t_{g_{i-1}^{v} }^{*} \cdots t_{g_{0}^{v} }^{*} \right] + \sum_{e \in T_n^v \setminus S}  [s_e s_e^*] \\
&\qquad = \left[ p_{ w_{0} (v) } - t_{ f_{0}^{v} } t_{f_{0}^{v}}^{*}  - t_{ g_{0}^{v} } t_{g_{0}^{v}}^{*} + t_{g_0^v} \cdots t_{g_{n-1}^v} t_{g_n^v} t_{g_n^v}^* t_{g_{n-1}^v}^* \cdots t_{g_0^v}^*\right] + \sum_{e \in T_n^v \setminus S}  [s_e s_e^*]  \\
&\qquad = \left[  t_{g_0^v} \cdots t_{g_{n-1}^v} t_{g_n^v} t_{g_n^v}^* t_{g_{n-1}^v}^* \cdots t_{g_0^v}^*\right] + \sum_{e \in T_n^v \setminus S}  [s_e s_e^*].  
\end{align*}
Note that $[  t_{g_n^v}^* t_{g_{n-1}^v}^* \cdots t_{g_0^v}^*t_{g_0^v} \cdots t_{g_{n-1}^v} t_{g_n^v} ] = [ t_{ g_{n}^{v} }^{*} t_{ g_{n}^{v} } ]$.  Since 
\begin{align*}
[ p_{ w_{n+1} (v) } ] + \sum_{f \in \lambda(T_n^v) \setminus \lambda(S)} [p_{r_F(f)}] = [ p_{ w_{n+1} (v) } ] + \sum_{ e_{k}^{v} \in T_{n}^{v} \setminus S } [ p_{ r_{F} ( f_{k}^{v} ) }] = [ t_{ g_{n}^{v} }^{*} t_{ g_{n}^{v} } ] +  \sum_{ e \in T_{n}^{v} \setminus S } [ s_{e}^{*} s_{e}  ],  
\end{align*}
we have that 
\begin{align*}
\mathcal{V} [ \kappa_E ] (\gamma_E(a_v)) = [ p_{ w_{n+1} (v) } ] + \sum_{f \in \lambda(T_n^v) \setminus \lambda(S)} [p_{r_F(f)}]. 
\end{align*}

Since
\begin{align*}
&\gamma_F(\varphi_E(a_{v,S})) \\
&\qquad = \gamma_F\left(b_{w_{n+1}(v)} + \sum_{f \in \lambda(T_n^v) \setminus \lambda(S)} b_{r_F(f)}\right) = \gamma_F (b_{w_{n+1}(v)}) + \gamma_F \left( \sum_{f \in \lambda(T_n^v) \setminus \lambda(S)} b_{r_F(f)} \right) \\
&\qquad = \gamma_F (b_{w_{n+1}(v)}) + \sum_{f \in \lambda(T_n^v) \setminus \lambda(S)} \gamma_F \left(  b_{r_F(f)} \right)= [p_{w_{n+1}(v)}] + \sum_{f \in \lambda(T_n^v) \setminus \lambda(S)} [p_{r_F(f)}],
\end{align*}
we have that 
\begin{align*}
\mathcal{V} [ \kappa_E ] (\gamma_E(a_{v,S})) = \gamma_F(\varphi_E(a_{v,S})).
\end{align*}
Since $\mathcal{V} [ \kappa_{E} ] \circ \gamma_{E}$ is equal to $\gamma_{F} \circ \varphi_{E}$ on the generators of $\overline{M}_{E}$, we have that $\mathcal{V} [ \kappa_{E} ] \circ \gamma_{E} = \gamma_{F} \circ \varphi_{E}$.

We now show that $\mathcal{V} [ \kappa_{E} ]$ is a monoid isomorphism.  First note that since $F$ is a row-finite graph, by Theorem~\ref{t:rowfinite} and Remark~\ref{r:monoids}, $\gamma_{F}$ is a monoid isomorphism.  By Proposition~2.1, $\varphi_{E}$ is a monoid isomorphism.  By the commutativity of the diagram, we have that $\mathcal{V} [ \kappa_{E} ]$ is surjective.

Set $E^{0} = \setof{ v_{n} }{ n \in \N }$.  Set $P_{n,E} = \sum_{ k = 1}^{n} P_{ v_{k} }$ and set $P_{n, F } = \sum_{ k = 1}^{n} p_{v_{k} }$.  By the proof of Theorem~5.6 of \cite{gaga:leavittpath2}, 
\begin{align*}
\ftn{ \kappa_{n} = ( \kappa_{E} ) \vert_{ P_{n,E} L_{K} ( E ) P_{n, E } } }{ P_{n,E} L_{K} ( E ) P_{n, E } }{ P_{n,F} L_{K} ( F ) P_{n, F } }
\end{align*}
is an isomorphism and the diagram
\begin{align*}
\xymatrix{
P_{n,E} L_{K} ( E ) P_{n, E } \ar[r]^-{\iota_{n, E}} \ar[d]_{ \kappa_{n} } & L_{K} ( E ) \ar[d]^{ \kappa_{E} } \\ 
P_{n, F} L_{K} ( F ) P_{n, F } \ar[r]_-{ \iota_{n, F} } 			&  L_{K} ( F ) 
}
\end{align*}
is commutative.  Note that $\bigcup_{ n = 1}^{ \infty } P_{n,E} L_{K} ( E ) P_{n, E } = L_{K} ( E )$.  Suppose $e$ and $q$ are idempotents in $\mathsf{M}_{m} ( L_{K} (E ) )$ such that $\mathcal{V} [ \kappa_{E} ] ( [ e ] ) = \mathcal{V} [ \kappa_{E} ]( [ q ] )$.  Then there exists $n \in \N$ such that $e$ and $q$ are idempotents in $\mathsf{M}_{m} ( P_{n,E} L_{K} ( E ) P_{n, E } )$.  By the commutativity of the above diagram, 
\begin{align*}
\mathcal{V} [ \iota_{n,F} ] \circ \mathcal{V} [ \kappa_{n} ] ( [ e ] ) = \mathcal{V} [ \iota_{n,F} ] \circ \mathcal{V} [ \kappa_{n} ] ( [ q ] ).
\end{align*}  
By Lemma~\ref{l:corner}, $\mathcal{V} [ \kappa_{n} ] ( [ e ] )  = \mathcal{V} [ \kappa_{n} ] ( [ q ] )$.  Since $\kappa_{n}$ is an isomorphism, we have that $[ e ] = [ q ]$.  Therefore, $\mathcal{V} [ \kappa_{E} ]$ is injective.  Hence, $\mathcal{V} [ \kappa_{E} ]$ is a monoid isomorphism.
\end{proof}

\begin{corollary}
Let $E = ( E^{0} , E^{1} , r_{E} , s_{E} )$ be a countable graph and let $\ftn{ \iota_{E} }{ L_{\C} ( E ) }{ C^{*} ( E ) }$ be the natural inclusion.  Then $\mathcal{V} [ \iota_{E} ]$ is a monoid isomorphism.
\end{corollary}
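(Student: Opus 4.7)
The plan is to parallel Lemma~\ref{l:naturalhom} on the $C^{*}$-side and compare the resulting morphisms. Let $\{P_v, S_e\}_{v \in E^0,\, e \in E^1}$ be the canonical Cuntz-Krieger $E$-family generating $C^{*}(E)$. The Cuntz-Krieger relations exhibit $\{P_v, S_e, S_e^*\}$ as a Leavitt $E$-family in $C^{*}(E)$ (with the $*$-operation playing the role of dual edges), and the universal $*$-homomorphism it induces is precisely $\iota_E$, sending $p_v \mapsto P_v$, $t_e \mapsto S_e$, and $t_e^* \mapsto S_e^*$. The verbatim calculation of Lemma~\ref{l:naturalhom} applied in $C^{*}(E)$ produces a monoid morphism $\gamma_E^{C^{*}}: \overline{M}_E \to \mathcal{V}[C^{*}(E)]$ with $\gamma_E^{C^{*}}(a_v) = [P_v]$ and $\gamma_E^{C^{*}}(a_{v,S}) = [P_v - \sum_{e \in S} S_e S_e^*]$. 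Evaluating on generators then gives the commutativity relation $\mathcal{V}[\iota_E] \circ \gamma_E = \gamma_E^{C^{*}}$.

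Since $\gamma_E$ is a monoid isomorphism by Theorem~\ref{t:iso-countable}, proving that $\mathcal{V}[\iota_E]$ is an isomorphism reduces to showing that $\gamma_E^{C^{*}}$ is one. To handle this, I would replay the proof of Theorem~\ref{t:iso-countable} on the $C^{*}$-side. Let $F$ be the Drinen-Tomforde desingularization of $E$. The construction of \cite{ddmt:arbgraph} provides a $*$-monomorphism $\kappa_E^{C^{*}}: C^{*}(E) \to C^{*}(F)$ whose image is a full hereditary $C^{*}$-subalgebra; consequently $\mathcal{V}[\kappa_E^{C^{*}}]$ is a monoid isomorphism. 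The same generator-by-generator bookkeeping used in Theorem~\ref{t:iso-countable} (it only invokes Leavitt/CK relations, which now hold in $C^{*}(F)$ as well) yields $\mathcal{V}[\kappa_E^{C^{*}}] \circ \gamma_E^{C^{*}} = \gamma_F^{C^{*}} \circ \varphi_E$, where $\varphi_E$ is the isomorphism of Proposition~\ref{p:monoid-dising-iso}.

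It thus remains to verify that $\gamma_F^{C^{*}}$ is an isomorphism for the row-finite countable graph $F$. This is the $C^{*}$-analog of Theorem~\ref{t:rowfinite}, which follows from Ara-Moreno-Pardo's theorem together with the observation---already noted in the introduction as a consequence of the row-finite case and established in \cite{amp:nonstablekthy}---that for row-finite graphs the natural inclusion $L_{\C}(F) \to C^{*}(F)$ induces a monoid isomorphism on $\mathcal{V}$. Assembling the pieces, $\gamma_E^{C^{*}}$ is an isomorphism, and therefore $\mathcal{V}[\iota_E] = \gamma_E^{C^{*}} \circ \gamma_E^{-1}$ is an isomorphism as well.

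The main obstacle will be the bookkeeping for the Drinen-Tomforde embedding at the $C^{*}$-level: one must confirm that the image of $\kappa_E^{C^{*}}$ is indeed a full hereditary subalgebra inducing an isomorphism on $\mathcal{V}$. This is more delicate than the algebraic corner argument of Lemma~\ref{l:corner} because $C^{*}(E)$ need not be unital, so one replaces the telescoping subalgebras $\bigcup_n P_{n,E} L_{K}(E) P_{n,E}$ by standard approximate-unit and Morita-equivalence arguments. Once that identification is in place, the combinatorial monoid identities needed to verify the commutative diagram on generators are formally identical to those of the algebraic case and do not need to be redone.
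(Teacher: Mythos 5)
Your proposal follows essentially the same route as the paper: desingularize to $F$, use the Drinen--Tomforde $*$-homomorphism $C^{*}(E)\to C^{*}(F)$ onto a full corner $PC^{*}(F)P$, invoke Theorem~7.1 of \cite{amp:nonstablekthy} for the row-finite inclusion $L_{\C}(F)\to C^{*}(F)$, and combine with Theorem~\ref{t:iso-countable}; introducing the auxiliary morphisms $\gamma_E^{C^{*}}$, $\gamma_F^{C^{*}}$ merely repackages the paper's factorization $\mathcal{V}[\iota_E]=\mathcal{V}[\overline{\kappa}_E]^{-1}\circ\mathcal{V}[\iota_F]\circ\mathcal{V}[\kappa_E]$. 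The one step you defer to ``standard approximate-unit and Morita-equivalence arguments''---injectivity of $\mathcal{V}$ for the non-unital corner inclusion---is handled in the paper concretely by writing $PC^{*}(F)P$ as the closure of the increasing union of the corners $P_nC^{*}(F)P_n$ and applying Proposition~4.5.2 of \cite{blackadarB} together with Lemma~\ref{l:corner}, while surjectivity comes for free from the commutative diagram.
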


\begin{proof}
Let $F$ be the desingularization of $E$.  By the proof of Lemma~2.9 of \cite{ddmt:arbgraph} and the definition of $\kappa_{E}$, there exists a $*$-homomorphism of $\ftn{ \overline{\kappa}_{E} }{C^{*} (E) }{ C^{*} ( F ) }$ such that  
\begin{align*}
\xymatrix{
L_{\C} ( E ) \ar[d]_{ \kappa_{E} } \ar[r]^-{ \iota_{E} } & C^{*} ( E ) \ar[d]^{ \overline{\kappa}_{E} } \\
L_{\C} ( F ) \ar[r]_-{\iota_{F}} & C^{*} ( F )
}
\end{align*}
is commutative.  Applying the functor $\mathcal{V} [ - ]$, we get that the diagram
\begin{align*}
\xymatrix{
\mathcal{V} [ L_{\C} ( E ) ]  \ar[d]_{ \mathcal{V} [ \kappa_{E} ] } \ar[r]^-{ \mathcal{V} [ \iota_{E} ] } & \mathcal{V} [ C^{*} ( E ) ] \ar[d]^{ \mathcal{V} [ \overline{\kappa}_{E}]  } \\
\mathcal{V} [ L_{\C} ( F ) ] \ar[r]_-{ \mathcal{V} [ \iota_{F} ] } & \mathcal{V} [ C^{*} ( F ) ]
}
\end{align*}
commutes.

 Let $\setof{ P_{v,F} , T_{e, F} }{ v \in F^{0}, e \in F^{1} }$ be a universal set of Cuntz-Kreiger $F$-family generating $C^{*} (F)$.  Then by Theorem~2.11 of \cite{ddmt:arbgraph}, there exists a projection $P$ in the multiplier algebra of $C^{*} (F)$ such that $\overline{\kappa}_{E} ( C^{*} (E) ) = P C^{*} ( F ) P$ and $P C^{*} (F) P$ is not contained in a closed ideal of $C^{*} (F)$, where $P = \sum_{ e \in E^{0} } P_{ v, F }$ (the sum converges in the strict topology).  

By Theorem~7.1 of \cite{amp:nonstablekthy}, $\mathcal{V} [ \iota_{F} ]$ is a monoid isomorphism.  By Theorem~\ref{t:iso-countable}, $\mathcal{V} [\kappa_{E} ]$ is a monoid isomorphism.  Hence, $\mathcal{V} [ \overline{\kappa}_{E} ]$ is surjective.  Suppose $e$ and $f$ are idempotents in $\mathsf{M}_{\infty} ( P C^{*}(F) P )$ such that $\mathcal{V} [ \overline{\kappa}_{E} ] ( [ e ] )  = \mathcal{V} [ \overline{\kappa}_{E} ]( [ f ] )$.  Note that
\begin{align*}
P C^{*} ( F ) P = \overline{ \bigcup_{ n = 1}^{ \infty } P_{n} C^{*} (F) P_{n} }
\end{align*}
where $P_{n} = \sum_{ k = 1}^{n} P_{v_{k} , F }$ with $E^{0} = \setof{ v_{k} }{ k \in \N }$.  By Proposition~4.5.2 of \cite{blackadarB}, every idempotent in $\mathsf{M}_{\infty} ( P C^{*} ( F ) P )$ is equivalent to an idempotent in $\mathsf{M}_{\infty} ( P_{n} C^{*} (F) P_{n} )$.  Hence, there exist $e' , f' \in  \mathsf{M}_{\infty} ( P_{n} C^{*} ( F ) P_{n} )$ for some $n \in \N$, and $[ \overline{\kappa}_{E} ( e )  ] = [ e ' ]$ and $[ \overline{\kappa}_{E} ( f )  ] = [ f' ]$ in $\mathcal{V} [ P_{n} C^{*} ( F ) P_{n} ]$.  Since $\mathcal{V} [ \overline{\kappa}_{E} ] ( [ e ] )  = \mathcal{V} [ \overline{\kappa}_{E} ]( [ f ] )$ in $\mathcal{V} [ C^{*} ( F ) ]$, we have that $[ e' ] = [ f' ]$ in $\mathcal{V} [ C^{*} ( F ) ]$.  By Lemma~\ref{l:corner}, $[ e' ] = [ f' ]$ in $\mathcal{V} [ P_{n} C^{*} (F) P_{n} ]$.  Therefore, $[ \overline{\kappa}_{E} ( e )  ] = [ \overline{\kappa}_{E} ( f )  ]$ in $\mathcal{V} [ P C^{*} ( F ) P ]$ which implies that $[e] = [f]$ in $\mathcal{V} [ C^{*} ( E ) ]$ since $\ftn{ \overline{\kappa}_{E} }{ C^{*} ( E ) }{ P C^{*}(F) P }$ is a $*$-isomorphism.  Hence, $\mathcal{V} [ \overline{\kappa}_{E} ]$ is injective.  Thus, $\mathcal{V} [ \overline{\kappa}_{E} ]$  is a monoid isomorphism.
 
By the above paragraphs, 
\begin{align*}
\mathcal{V} [ \iota_{E} ] = \mathcal{V} [ \overline{\kappa}_{E} ]^{-1} \circ \mathcal{V} [ \iota_{F} ]  \circ \mathcal{V} [ \kappa_{E} ]
\end{align*} 
is a monoid isomorphism.
\end{proof}

\section{Isomorphism for arbitrary directed graphs}\label{uncountable}

To prove that $\ftn{ \gamma_{E}  }{ \overline{M}_{E} }{ \mathcal{V} [ L_{K} (E) ] }$ is a monoid isomorphism for an arbitrary graph $E$, we will use the fact that $\gamma_{F}$ is a monoid isomorphism for countable graphs and the fact that $E$ can be expressed as a direct limit of countable graphs.  In order to use these facts, we will need to prove that $E \mapsto \overline{M}_{E}$ is a continuous functor for direct limits with morphisms being CK-morphisms as defined in \cite{kg:leavittlimits}, p. 8.

We begin by establishing some results in the category $\textbf{CMon}_{0}$ which are probably well-known to the experts, but for which we were unable to find a reference.  For the convenience of the reader, we provide the proof here.  The first fact is that direct limits exist in this category.


\begin{lemma}\label{l:direct-limits-monoids}
Let $\mathcal{S} = \left( ( S_{i} )_{ i \in I } , ( \mu_{ij} )_{ i \leq j , \text{ in } I} \right)$ be a direct system in $\textbf{CMon}_{0}$.  Then there exists an abelian monoid $S_{\infty}$ together with morphisms $\ftn{ \mu_{i, \infty} }{ S_{i} }{ S_{\infty} }$ such that 
\begin{itemize}
\item[(a)] $\mu_{i,\infty}  = \mu_{j, \infty }  \circ \mu_{ij}$ for all $i \leq j$ in $I$;

\item[(b)] for each $a \in S_{\infty}$, there exists $a_{1} \in S_{i}$ for some $i \in I$ with $a = \mu_{i, \infty}  ( a_{1} )$; and

\item[(c)] given any other abelian monoid $M$ and morphisms 
\begin{align*}
\left( \ftn{ \psi_{i} }{ S_{i} }{ M } \right)_{ i \in I }
\end{align*} 
such that $\psi_{i} = \psi_{j} \circ \mu_{ij}$ for all $i \leq j$ in $I$, there exists a morphism $\ftn{ \psi }{ S_{\infty} }{ M }$ such that $\psi_{i} = \psi \circ \mu_{i, \infty}$ for all $i \in I$.
\end{itemize}
\end{lemma}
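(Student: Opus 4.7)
The plan is to carry out the standard direct-limit construction at the level of sets and then verify that the abelian monoid structure descends. I would first form the disjoint union $T = \bigsqcup_{i \in I} S_{i}$ and define a relation $\sim$ on $T$ by declaring, for $a \in S_{i}$ and $b \in S_{j}$, that $a \sim b$ if and only if there exists $k \in I$ with $k \geq i$ and $k \geq j$ such that $\mu_{ik}(a) = \mu_{jk}(b)$. Directedness of $I$ together with the cocycle identity $\mu_{il} = \mu_{kl} \circ \mu_{ik}$ for $i \leq k \leq l$ makes $\sim$ an equivalence relation; transitivity in particular is established by choosing a common upper bound for the indices involved. I then set $S_{\infty} = T / \sim$ and define $\ftn{\mu_{i,\infty}}{S_{i}}{S_{\infty}}$ by $\mu_{i,\infty}(a) = [a]$. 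Property~(a) is immediate from the definition of $\sim$, and property~(b) holds automatically since each element of $S_{\infty}$ is by construction the class of some $a \in S_{i}$.

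Next, I would equip $S_{\infty}$ with its monoid structure. Given classes $[a], [b] \in S_{\infty}$ represented by $a \in S_{i}$ and $b \in S_{j}$, pick $k \in I$ with $k \geq i, j$ and set
\[ [a] + [b] = [\mu_{ik}(a) + \mu_{jk}(b)]. \]
Independence of the choice of $k$ and of the chosen representatives is a routine check: any two candidate indices can be compared by passing to a further upper bound in $I$ and invoking the cocycle identity. Because each $\mu_{ij}$ preserves the identity, the class $[0_{i}]$ is independent of $i$ and is the identity of $S_{\infty}$; commutativity and associativity of the addition follow from the corresponding identities inside each $S_{k}$. Each $\mu_{i,\infty}$ is then visibly an identity-preserving monoid morphism.

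For the universal property~(c), given compatible morphisms $\ftn{\psi_{i}}{S_{i}}{M}$ with $\psi_{i} = \psi_{j} \circ \mu_{ij}$ for $i \leq j$, I would define $\ftn{\psi}{S_{\infty}}{M}$ by $\psi([a]) = \psi_{i}(a)$ for $a \in S_{i}$. If $a \sim b$ via $k \geq i, j$, then $\psi_{i}(a) = \psi_{k}(\mu_{ik}(a)) = \psi_{k}(\mu_{jk}(b)) = \psi_{j}(b)$, so $\psi$ is well-defined, and that $\psi$ preserves addition and identity follows from the definition of the operation on $S_{\infty}$ together with the fact that each $\psi_{k}$ is a monoid morphism. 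Uniqueness of $\psi$ is forced by property~(b). The only mildly subtle point throughout is the care required to avoid any use of cancellation, since $S_{\infty}$ is only a monoid and not a group; however, each relevant well-definedness check reduces to a computation inside a single $S_{l}$ after transport by suitable transition morphisms, so this is never an actual obstacle.
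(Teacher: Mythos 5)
Your construction is exactly the one the paper uses: disjoint union, the equivalence relation $a \sim b$ via equalization at a common upper bound, the induced addition $[a]+[b]=[\mu_{ik}(a)+\mu_{jk}(b)]$, and the universal map $\psi([a])=\psi_i(a)$. The argument is correct and matches the paper's proof in every essential respect, so nothing further is needed.
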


 \begin{proof}
 Let $S$ denote the disjoint union of the sets $S_i$, for $i \in I$.
   Define a relation on $S$ as follows.  If $s \in S_i$ and $t \in S_j$ then $s \sim t$ if and only if there exists $k \in I$ such that $k \geq i, j$ in $I$ and $\mu_{ik}(s)=\mu_{jk}(t)$.  One easily shows that $\sim$ is an equivalence relation.  Define $S_{\infty}=S/\sim$ and let $[s]$ be the equivalence class corresponding to $s \in S_i$.  Given $s \in S_i$ and $t \in S_j$, define $[s]+[t]=[\mu_{ik}(s)+\mu_{jk}(t)]$ for any $k \geq i, j$ in $I$.  An easy check shows that this operation is well-defined, and commutative with identity element $[0]$.
 
Now define $\mu_{i, \infty }: S_i \rightarrow S_{\infty}$ by $\mu_{i,\infty} (s)=[s]$.  Then it follows easily that $\mu_{i, \infty}$ is an identity-preserving monoid morphism with the property that $\mu_{i, \infty }=\mu_{j, \infty} \circ \mu_{ij}$ for all $i \leq j$ in $I$.  Also, (b) follows from the definition of $S_{\infty}$ and $\mu_{i, \infty}$.
 
 Now suppose $M$ is an abelian monoid and that for each $i \in I$ we have identity-preserving morphisms $\psi:S_i \rightarrow M$ such that $\psi_i=\psi_j \circ \mu_{ij}$.  Define $\psi: S_{\infty} \rightarrow M$ as follows.  Let $[s]=\mu_{i, \infty }(s) \in S_{\infty}$ such that $s \in S_i$ for some $i \in I$.  Now define $\psi$ by $\psi([s])=\psi_i(s)$.  If $[s]=[t]$ in $S_{\infty}$ for some $t \in S_j$, then there exists $k \in I$ with $k \geq i, j$ such that $\mu_{ik}(s)=\mu_{jk}(t)$ and
 $$\psi_i(s)=\psi_k (\mu_{ik}(s))=\psi_k( \mu_{jk}(t))=\psi_j(t).$$
 So $\psi$ is well-defined.
 
 Note that for $s \in S_i$, we have $\psi (\mu_{i,\infty} (s))=\psi_i (s)$ by definition, so that $\psi_i=\psi \circ \mu_{i, \infty}$.  A final quick check shows that $\psi([0])=\psi( \mu_{i,\infty} (0))=\psi_i(0)=0$, so that $\psi$ is indeed an identity-preserving monoid morphism.
 \end{proof}

\begin{lemma}\label{l:iso-direct-limits-monoids}
Let $\mathcal{S} = \left( ( S_{i} )_{ i \in I } , ( \mu_{ij} )_{ i \leq j \text{ in } I} \right)$ and $\mathcal{T} = \left( ( T_{i} )_{ i \in I } , ( \nu_{ij} )_{ i \leq j \text{ in } I} \right)$ be direct systems in $\textbf{CMon}_{0}$.  Suppose there exists a collection of morphisms 
\begin{align*}
\left( \ftn{ \psi_{i} }{ S_{i} }{ T_{i} } \right)_{ i\in I}
\end{align*}
such that $\nu_{ij} \circ \psi_{i}  = \psi_{j} \circ \mu_{ij}$ for all $i \leq j$ in $I$.  Then there exists a unique morphism $\ftn{ \psi }{ S_{\infty} }{ T_{\infty} }$ such that 
\begin{align*}
\nu_{i, \infty } \circ \psi_{i} = \psi \circ \mu_{ i , \infty }
\end{align*}
for all $i \in I$.  Consequently, if $\psi_{i}$ are monoid isomorphisms for all $i \in I$, $\psi$ is a monoid isomorphism.
\end{lemma}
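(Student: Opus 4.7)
The plan is to reduce the statement directly to the universal property established in part (c) of Lemma~\ref{l:direct-limits-monoids}. First I would form the composite morphisms $\varphi_i := \nu_{i, \infty} \circ \psi_i : S_i \to T_\infty$ and verify that the family $(\varphi_i)_{i \in I}$ is compatible with the structure maps of $\mathcal{S}$, i.e.\ that $\varphi_j \circ \mu_{ij} = \varphi_i$ for all $i \leq j$ in $I$. This is a one-line calculation: using the hypothesis $\psi_j \circ \mu_{ij} = \nu_{ij} \circ \psi_i$ and the compatibility $\nu_{j, \infty} \circ \nu_{ij} = \nu_{i, \infty}$ from part (a) of the previous lemma, one obtains
\begin{align*}
\varphi_j \circ \mu_{ij} = \nu_{j,\infty} \circ \psi_j \circ \mu_{ij} = \nu_{j,\infty} \circ \nu_{ij} \circ \psi_i = \nu_{i, \infty} \circ \psi_i = \varphi_i.
\end{align*}

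Next I would invoke part (c) of Lemma~\ref{l:direct-limits-monoids} applied to the compatible family $(\varphi_i)_{ i \in I}$ with target $M = T_\infty$; this yields an identity-preserving monoid morphism $\psi : S_\infty \to T_\infty$ satisfying $\psi \circ \mu_{i, \infty} = \varphi_i = \nu_{i, \infty} \circ \psi_i$ for every $i \in I$, which is the desired relation. Uniqueness of $\psi$ is automatic: any two such morphisms agree on each image $\mu_{i, \infty}(S_i)$, and by part (b) these images cover $S_\infty$.

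For the final assertion, if every $\psi_i$ is an isomorphism, I would apply the same construction to the inverse family $(\psi_i^{-1})_{i \in I}$, which is compatible with respect to the systems $\mathcal{T}$ and $\mathcal{S}$ because inverting the identity $\nu_{ij} \circ \psi_i = \psi_j \circ \mu_{ij}$ gives $\mu_{ij} \circ \psi_i^{-1} = \psi_j^{-1} \circ \nu_{ij}$. This produces a morphism $\psi' : T_\infty \to S_\infty$ with $\psi' \circ \nu_{i, \infty} = \mu_{i, \infty} \circ \psi_i^{-1}$. Then $\psi' \circ \psi$ and $\id_{S_\infty}$ both satisfy the universal characterization associated to the compatible family $(\mu_{i, \infty})_{i \in I}$, so by the uniqueness clause they coincide; the same argument on the other side gives $\psi \circ \psi' = \id_{T_\infty}$, so $\psi$ is an isomorphism.

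There is no real obstacle here; the statement is the standard functoriality of colimits, and every step reduces to bookkeeping with the universal property just proved. The only point requiring minor care is making sure that all morphisms in sight preserve the identity element, which is immediate since both the $\psi_i$ and the structure maps $\nu_{i, \infty}, \mu_{i, \infty}$ do.
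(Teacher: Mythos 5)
Your proposal is correct and follows essentially the same route as the paper: form the compatible family $\nu_{i,\infty}\circ\psi_{i}$, apply the universal property from the preceding lemma to obtain $\psi$, and use the fact that the images $\mu_{i,\infty}(S_{i})$ cover $S_{\infty}$ to get uniqueness. Your explicit verification of the final isomorphism clause via the inverse family $(\psi_{i}^{-1})$ is a small addition the paper leaves implicit, but it is the standard argument and changes nothing essential.
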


\begin{proof}
Let $i \leq j$ in $I$.  Note that 
\begin{align*}
\nu_{ i, \infty } \circ \psi_{i} = \nu_{ j, \infty } \circ \nu_{ij} \circ \psi_{i} = \nu_{ j , \infty } \circ \psi_{j} \circ \mu_{ij}.
\end{align*}
Hence, by Lemma~\ref{l:direct-limits-monoids}, there exists a morphism $\ftn{ \psi }{ S_{\infty} }{ T_{\infty} }$ such that 
\begin{align*}
\nu_{ i ,\infty } \circ \psi_{i} = \psi \circ \mu_{ i , \infty }.
\end{align*}

Suppose $\ftn{ \phi }{ S_{\infty} }{ T_{\infty} }$ is a morphism such that 
\begin{align*}
\nu_{ i ,\infty } \circ \psi_{i} = \phi \circ \mu_{ i , \infty }.
\end{align*}
Let $a \in S_{\infty}$.  By Lemma~\ref{l:direct-limits-monoids}, there exist $i \in I$ and $a_{1} \in S_{i}$ such that $a = \mu_{i, \infty } ( a_{1} )$.  Therefore, 
\begin{align*}
\psi ( a ) = \psi ( \mu_{ i, \infty } ( a_{1} ) ) = \nu_{ i ,\infty } \circ \psi_{i} ( a_{1} ) = \phi ( \mu_{ i , \infty } ( a_{1} ) ) = \phi ( a ).
\end{align*}
\end{proof}

\begin{definition}
Let $E$ and $F$ be arbitrary graphs.  By definition (see \cite{kg:leavittlimits}, p. 8), a graph morphism $\eta: E \rightarrow F$ is a \textit{CK-morphism} provided
 \begin{itemize}
 \item[(1)]  The restrictions $\eta^0$ to the vertex set $E^0$  and $\eta^1$ to edge set $E^1$  are both injective;
 \item[(2)]  For each $v \in E^0$ which is neither a sink nor an infinite emitter, $\eta^1$ induces a bijection $s_{E}^{-1}(v) \rightarrow s_{F}^{-1}(\eta^0(v))$.
 \end{itemize}
 Note that any CK-morphism must map infinite emitters to infinite emitters.  So $v \in E^0$ an infinite emitter implies $\eta(v) \in F^0$ is an infinite emitter.
 \end{definition}
 
Let \textbf{CKGr} be the category whose objects are directed graphs and whose set of morphisms are CK-morphisms.  By Lemma~2.5 of  \cite{kg:leavittlimits}, arbitrary direct limits exist in this category.  As in Section~2.2 of \cite{kg:leavittlimits}, given a field $K$, $K$-\textbf{Alg} will denote the category of (not necessarily unital) $K$-algebras.  Thus, objects in $K$-\textbf{Alg} are arbitrary $K$-algebras (that is, arbitrary vector spaces over $K$, equipped with an associative, $K$-bilinear multiplication), and sets of morphisms are arbitrary multiplicative $K$-linear maps.  It is well-known that direct limits exist in categories related to many algebraic structures (see \cite{faith:alg-reference}, section I.5).  In particular, given a directed system of $K$-algebras, a direct limit exists in the category of (not necessarily unital) rings.  It is a straightforward exercise to define a $K$-algebra structure on this direct limit.  So direct limits exist in $K$-\textbf{Alg}.


Making use of Lemma~2.5 of \cite{kg:leavittlimits}, let $\mathscr{E}=\left( (E_i)_{i \in I}, (\phi_{ij})_{i \leq j~\mbox{in }I} \right)$ be a direct system in \textbf{CKGr}.  Let $\mathscr{A}=\left( (A_i)_{i \in I}, (\gamma_{ij})_{i \leq j~\mbox{in }I} \right)$ be the corresponding direct system of Leavitt path algebras (so that $A_i=L_K(E_i)$ and $\gamma_{ij}=L_K(\phi_{ij})$).  Let $A_{\infty}$ be the direct limit of this system in $K$-\textbf{Alg} (so $A_{\infty}=L_K(E_{\infty})$).  Then we have maps $\iota_i=L_K(\eta_i)$ with $\iota_i:A_i \rightarrow A_{\infty}$ such that $\iota_i=\iota_j \circ \gamma_{ij}$ and where $\eta_i$ are CK-morphisms.

A direct system $\mathscr{A}=\left( (A_i)_{i \in I}, (\gamma_{ij})_{i \leq j~\mbox{in }I} \right)$ in $K$-\textbf{Alg} gives rise to a direct system
$\mathcal{V}[\mathscr{A}]=\left( (\mathcal{V}[A_i])_{i \in I}, (\mathcal{V}[\gamma_{ij}])_{i \leq j~\mbox{in }I} \right)$ in $\textbf{CMon}_0$.
By Lemma~\ref{l:direct-limits-monoids}, there exists an abelian monoid $\mathcal{V}_{\infty}$ that is the direct limit of this system.  So there are maps $\lambda_i: \mathcal{V}[A_i] \rightarrow \mathcal{V}_{\infty}$ such that 
$$\lambda_i= \lambda_j \circ \mathcal{V}[\gamma_{ij}].$$  
This leads to the following.

\begin{lemma}\label{l:Matrix-Entries}
Let $\mathscr{A}=\left( (A_i)_{i \in I}, (\gamma_{ij})_{i \leq j~\mbox{in }I} \right)$ be a direct system as above.  Then, for any $[e] \in \mathcal{V}[A_{\infty}]$, there exists a positive integer $n$ and $i \in I$ such that  $e \in M_n(\iota_i(A_i))$.  Moreover, the algebra homomorphism $\iota_i: A_i \rightarrow A_{\infty}$ induces an identity-preserving monoid morphism from $\mathcal{V}[A_i]$ to $\mathcal{V}[A_{\infty}]$.
\end{lemma}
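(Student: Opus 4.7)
The plan is to use the standard construction of the direct limit $A_\infty$ in $K$-\textbf{Alg}, which realizes $A_\infty$ as $\bigcup_{i\in I}\iota_i(A_i)$ with the transition compatibility $\iota_{j}\circ\gamma_{ij}=\iota_i$ for $i\leq j$; in particular every element of $A_\infty$ is of the form $\iota_i(a)$ for some $i\in I$ and some $a\in A_i$. This description is what does all the work; the rest is essentially bookkeeping.

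For the first assertion, given $[e]\in\mathcal{V}[A_\infty]$ I would pick a representative $e\in M_n(A_\infty)$ for some $n\in\N$ and write $e=(e_{kl})_{k,l=1}^{n}$. Each of the $n^2$ matrix entries $e_{kl}$ lies in $\iota_{i_{kl}}(A_{i_{kl}})$ for some $i_{kl}\in I$. Because $I$ is directed and the index set $\{i_{kl} : 1\leq k,l\leq n\}$ is finite, I choose a common upper bound $i\in I$; then the relation $\iota_{i_{kl}}=\iota_i\circ\gamma_{i_{kl},i}$ exhibits every $e_{kl}$ as an element of $\iota_i(A_i)$, and hence $e\in M_n(\iota_i(A_i))$, as required.

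For the moreover part, I observe that $\iota_i$ extends entrywise to a $K$-algebra homomorphism $M_n(\iota_i):M_n(A_i)\to M_n(A_\infty)$ for each $n$, and these are compatible with the stabilization inclusions used to define $M_\infty$. A ring homomorphism carries idempotents to idempotents and preserves the relation $e=xy$, $yx=f$ that defines Murray--von Neumann equivalence, so the rule $[e]\mapsto[M_\infty(\iota_i)(e)]$ is a well-defined map $\mathcal{V}[\iota_i]:\mathcal{V}[A_i]\to\mathcal{V}[A_\infty]$; it is additive because $M_\infty(\iota_i)$ respects block sums $e\oplus f$, and it preserves the identity element because $\iota_i(0)=0$ sends $[0]$ to $[0]$. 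The only real subtlety is the finiteness/directedness argument in the first paragraph; the second assertion is a formal consequence of the functoriality of $\mathcal{V}[-]$ and so presents no obstacle.
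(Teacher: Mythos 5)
Your proposal is correct and follows essentially the same route as the paper's proof: both locate $e$ in some $M_n(A_\infty)$, use directedness of $I$ together with the finiteness of the set of matrix entries to push all entries into a single $\iota_i(A_i)$, and then obtain the monoid morphism from the entrywise extensions $M_n(\iota_i)$, their compatibility with stabilization, and the fact that a ring homomorphism preserves the relation $e=xy$, $yx=f$. No gaps; your explicit remarks on additivity and preservation of the identity only make explicit what the paper leaves implicit.
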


\begin{proof}
Let $[e] \in \mathcal{V}[A_{\infty}]$.  By definition of $\mathcal{V}[A_{\infty}]$, we have that $e \in M_{\infty} (A_{\infty})$.  Since $ M_{\infty} (A_{\infty})=\bigcup_{n=1}^{\infty} M_{n} (A_{\infty})$, any element of $ M_{\infty} (A_{\infty})$ must belong to some $ M_{n} (A_{\infty})$.  So $e \in  M_{n} (A_{\infty})$ for some $n$.  

By definition of $A_{\infty}$, each element of $A_{\infty}$ is in the image of $\iota_k: A_k \rightarrow A_{\infty}$ for some $k$.  For each $(r,s)$-entry $e_{rs}$ of $e$,  let $\iota_{j(r,s)}$ be such a map.  Choose $i \geq j(r,s)$ for all $(r,s)$.  Since $\iota_i=\iota_{j(r,s)} \circ \gamma_{i j(r,s)}$, we have $a \in M_n(\iota_i(A_i))$.  

For the final claim, for each $n$ the map $\iota_i: A_i \rightarrow A_{\infty}$ clearly gives a family of homomorphisms $\iota_i^n: M_n( A_i) \rightarrow M_n(A_{\infty})$.  Note that for $f  \in M_n(A_i)$, $f \oplus 0 \in M_m(A_i)$ for $m>n$ and
$$\iota_i^m(f)=\iota_i^n(f) \oplus 0.$$
Therefore $\iota_i$ induces a well-defined map 
$$\mathcal{V}[\iota_i]: M_{\infty}(A_i) \rightarrow M_{\infty}(A_{\infty})$$
defined by $\mathcal{V}[\iota_i](f)=\iota_i^n(f)$ (where $f \in M_n(A_i) \subseteq M_{\infty}(A_i)$).

Now consider $[f]=[g]$ in $\mathcal{V}[A_i]$.  By definition $f=ab$ and $g=ba$ for some matrices $a, b \in M_{\infty}(A_i)$.  For some sufficiently large $n$ we have
$$\mathcal{V}[\iota_i](f)=\iota_i^n(f)=\iota_i^n(ab)=\iota_i^n(a)\iota_i^n(b)=
\mathcal{V}[\iota_i](a)\mathcal{V}[\iota_i](b),$$
using the fact that $\iota_i^n$ is a homomorphism.  A similar result shows $ 
\mathcal{V}[\iota_i](g)=\mathcal{V}[\iota_i](b)\mathcal{V}[\iota_i](a)$.  This shows 
$\mathcal{V}[\iota_i](f)$ is equivalent to  $\mathcal{V}[\iota_i](g)$ in $M_{\infty}(A_{\infty})$.  Clearly
$\mathcal{V}[\iota_i](0)=0$. Therefore 
$$\mathcal{V}[\iota_i]: \mathcal{V}(A_i) \rightarrow \mathcal{V}(A_{\infty})$$ is a well-defined identity-preserving monoid morphism.

\vspace{.2cm}
\end{proof}

\begin{lemma}\label{l:equivalent-idempotent}
Let $\left( ( A_{i} )_{i \in I } , ( \phi_{ij} )_{ i \leq j  \text{ in } I } \right)$ be a direct system of algebras and let $A_{\infty}$ be the direct limit.  Let $e, f$ be idempotents in $A_{i}$ such that $\phi_{i,\infty} ( e ) \sim \phi_{i, \infty } ( f )$ in $A_{\infty}$.  Then there exists $j \in I$ with $i \leq j$ and $\phi_{ij}(e) \sim \phi_{ij}(f)$ in $A_{i}$.
\end{lemma}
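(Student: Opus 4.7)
The plan is to unfold the conclusion from the two defining features of the direct limit in $K$-\textbf{Alg}: first, every element of $A_{\infty}$ is of the form $\phi_{k,\infty}(a)$ for some $k \in I$ and some $a \in A_{k}$; and second, whenever $a,b \in A_{k}$ satisfy $\phi_{k,\infty}(a) = \phi_{k,\infty}(b)$, there is a further index $j \geq k$ with $\phi_{kj}(a) = \phi_{kj}(b)$. These are immediate from the explicit construction of the direct limit as a quotient of $\bigsqcup_i A_i$ by the equivalence relation generated by the connecting maps (cf.\ \cite{faith:alg-reference}, Section~I.5).

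By hypothesis, there exist $x,y \in A_{\infty}$ such that $\phi_{i,\infty}(e) = xy$ and $\phi_{i,\infty}(f) = yx$. Applying the first property to each of $x$ and $y$ and then passing to a common upper bound (using that $I$ is directed), I obtain an index $k \geq i$ together with lifts $\tilde{x},\tilde{y} \in A_{k}$ satisfying $\phi_{k,\infty}(\tilde{x}) = x$ and $\phi_{k,\infty}(\tilde{y}) = y$.

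Since $\phi_{k,\infty}(\tilde{x}\tilde{y}) = xy = \phi_{i,\infty}(e) = \phi_{k,\infty}(\phi_{ik}(e))$, the second property yields $j_{1} \geq k$ with $\phi_{k j_{1}}(\tilde{x}\tilde{y}) = \phi_{i j_{1}}(e)$. Similarly, applying the argument to $\tilde{y}\tilde{x}$ and $\phi_{ik}(f)$ produces $j_{2} \geq k$ with $\phi_{k j_{2}}(\tilde{y}\tilde{x}) = \phi_{i j_{2}}(f)$. Choose any $j \in I$ with $j \geq j_{1}, j_{2}$, and set $x' = \phi_{kj}(\tilde{x})$ and $y' = \phi_{kj}(\tilde{y})$. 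Because each $\phi_{kj}$ is a $K$-algebra morphism, $x'y' = \phi_{kj}(\tilde{x}\tilde{y}) = \phi_{ij}(e)$ and $y'x' = \phi_{kj}(\tilde{y}\tilde{x}) = \phi_{ij}(f)$ in $A_{j}$, which gives $\phi_{ij}(e) \sim \phi_{ij}(f)$, as desired.

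The only nontrivial point is verifying (or, in this case, quoting) the two properties of direct limits used at the outset; the rest is directed-set bookkeeping. I expect no real obstacle beyond making sure the lifts of $x$ and $y$ are chosen in the same $A_{k}$ and that all subsequent equalities are pushed up to a common stage $A_{j}$.
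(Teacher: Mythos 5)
Your proof is correct and follows essentially the same route as the paper's: lift $x,y$ to a common stage $A_k$, observe that $\tilde{x}\tilde{y}$ and $\phi_{ik}(e)$ (resp.\ $\tilde{y}\tilde{x}$ and $\phi_{ik}(f)$) agree in the limit, and push up to a common index $j$ where the equalities hold on the nose. The only difference is that you spell out the directed-set bookkeeping (separate indices $j_1, j_2$ and a common upper bound) that the paper compresses into a single sentence.
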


\begin{proof}
Let $x , y \in A_{\infty}$ such that $\phi_{i, \infty } ( e ) = x y$ and $\phi_{ i, \infty } ( f ) = y x$.  Then there exists $k \in I$ with $i \leq k$ and there exist $s, t \in A_{k}$ such that $\phi_{k, \infty } ( s ) = x$ and $\phi_{ k, \infty } ( t ) = y$.  Hence, 
\begin{align*}
\phi_{i, \infty } ( e ) = \phi_{ k , \infty } ( s t ) \quad \text{and} \quad \phi_{ i, \infty } ( f ) = \phi_{ k, \infty } ( t s ).
\end{align*}    
Hence, there exists $j \in I$ with $i \leq j$ and $k \leq j$ such that 
\begin{align*}
\phi_{ ij } ( e ) = \phi_{ kj } ( s t )  \quad \text{and} \quad \phi_{ ij} (f) = \phi_{ kj} ( t s ).
\end{align*}
Therefore, $\phi_{ij} ( e ) \sim \phi_{ij} ( f )$ in $A_{j}$.
\end{proof}

\begin{lemma}\label{l:non-stable-iso}
Let $\left( ( E_{i} )_{i \in I } , ( \phi_{ij} )_{ i \leq j \text{ in } I} \right)$ be a direct system in \textbf{CKGr} with corresponding object $E_{\infty}$ and let 
\begin{align*}
\left(  ( L_{K} ( E_{i} ) )_{ i \in I } , ( L_{K} ( \phi_{ij} ) )_{ i \leq j \text{ in } I } \right)
\end{align*}
 be the direct system in $K$-\textbf{Alg}.  Set $\nu_{ij} =  \mathcal{V} [  L_{K} ( \phi_{ij} ) ]$.  If $\mathcal{V}_{\infty}$ is the direct limit of 
\begin{align*}
\left(  ( \mathcal{V} [ L_{K} ( E_{i} ) ]  )_{ i \in I } , (  \nu_{ij} )_{ i \leq j \text{ in } I } \right)
\end{align*}
in $\textbf{CMon}_{0}$, there exists a monoid isomorphism $\ftn{ \psi_{ \mathcal{V} } }{ \mathcal{V}_{\infty} }{ \mathcal{V} [ L_{K} ( E_{\infty} ) ] }$ such that 
\begin{align*}
\psi_{ \mathcal{V} } \circ \nu_{i, \infty } = \mathcal{V} [ L_{K} ( \phi_{i, \infty } ) ].
\end{align*}
\end{lemma}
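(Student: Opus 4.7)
The plan is to invoke the universal property of direct limits in $\textbf{CMon}_0$ from Lemma~\ref{l:direct-limits-monoids}(c) to produce $\psi_{\mathcal{V}}$, and then to verify bijectivity by lifting idempotents and equivalences from $L_K(E_\infty)$ back to the finite-stage algebras $L_K(E_i)$. Functoriality of $L_K$ and $\mathcal{V}[-]$ together with $\phi_{j,\infty} \circ \phi_{ij} = \phi_{i,\infty}$ gives
\begin{align*}
\mathcal{V}[L_K(\phi_{j,\infty})] \circ \nu_{ij} = \mathcal{V}[L_K(\phi_{i,\infty})]
\end{align*}
for all $i \leq j$ in $I$, so Lemma~\ref{l:direct-limits-monoids}(c) produces a unique monoid morphism $\psi_{\mathcal{V}} : \mathcal{V}_\infty \to \mathcal{V}[L_K(E_\infty)]$ with $\psi_{\mathcal{V}} \circ \nu_{i,\infty} = \mathcal{V}[L_K(\phi_{i,\infty})]$ for each $i$.

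For surjectivity, take $[e] \in \mathcal{V}[L_K(E_\infty)]$. By Lemma~\ref{l:Matrix-Entries} there exist $i \in I$ and $n$ such that $e \in \mathsf{M}_n(\iota_i(L_K(E_i)))$, so we can write $e = \iota_i^n(f)$ for some $f \in \mathsf{M}_n(L_K(E_i))$. Since $\mathsf{M}_n$ commutes with direct limits in $K$-\textbf{Alg}, the map $\iota_i^n$ is the canonical map into the direct limit $\mathsf{M}_n(L_K(E_\infty))$; applying the standard fact that elements sent to zero in a direct limit of algebras are already zero at a finite stage to $f^2 - f$ yields some $j \geq i$ such that $\gamma_{ij}^n(f)$ is an idempotent in $\mathsf{M}_n(L_K(E_j))$ whose image in $\mathsf{M}_n(L_K(E_\infty))$ is $e$. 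Then $\psi_{\mathcal{V}}(\nu_{j,\infty}([\gamma_{ij}^n(f)])) = [e]$.

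For injectivity, suppose $\psi_{\mathcal{V}}(a) = \psi_{\mathcal{V}}(b)$. Using Lemma~\ref{l:direct-limits-monoids}(b) and pushing both preimages to a common index $k$ via the $\nu_{ij}$, we may write $a = \nu_{k,\infty}([e])$ and $b = \nu_{k,\infty}([f])$ for idempotents $e, f$ in some $\mathsf{M}_n(L_K(E_k))$. By construction of $\psi_{\mathcal{V}}$, we have $\iota_k^n(e) \sim \iota_k^n(f)$ in $\mathsf{M}_\infty(L_K(E_\infty))$, and applying Lemma~\ref{l:equivalent-idempotent} to the direct system $(\mathsf{M}_\infty(L_K(E_i)))$, whose direct limit is $\mathsf{M}_\infty(L_K(E_\infty))$, provides $j \geq k$ with $\gamma_{kj}^n(e) \sim \gamma_{kj}^n(f)$, so that $\nu_{kj}([e]) = \nu_{kj}([f])$ and consequently $a = b$.

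The main obstacle is the lifting step in the surjectivity argument: the element $e$ is recovered in $\mathsf{M}_n(L_K(E_i))$ only as a ring element, not yet as an idempotent, and one must pass to a larger index $j$ to obtain an honest idempotent. This relies on $\mathsf{M}_n$ commuting with direct limits in $K$-\textbf{Alg} and on kernels of the canonical maps into a direct limit being detected at a finite stage, both of which are standard facts about direct limits that mesh with the setting established by Lemmas~\ref{l:Matrix-Entries} and \ref{l:equivalent-idempotent}.
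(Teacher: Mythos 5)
Your proposal is correct and follows essentially the same route as the paper: the universal property from Lemma~\ref{l:direct-limits-monoids} produces $\psi_{\mathcal{V}}$, Lemma~\ref{l:Matrix-Entries} gives surjectivity, and Lemma~\ref{l:equivalent-idempotent} gives injectivity. You are in fact slightly more careful than the paper in the surjectivity step: the paper lifts $e$ to a matrix $f \in \mathsf{M}_n(L_K(E_i))$ and immediately writes $\nu_{i,\infty}([f])$, glossing over the fact that $f$ need not itself be an idempotent, whereas you correctly pass to a larger index $j$ at which $\gamma_{ij}^n(f)$ becomes an honest idempotent before forming its class.
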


\begin{proof}
The algebra homomorphisms $\gamma_{ij}: A_i \rightarrow A_j$ and $\iota_i: A_i \rightarrow A_{\infty}$ induce maps $$\mathcal{V}[A_i] \overset{\mathcal{V}[\gamma_{ij} ]}{\longrightarrow} \mathcal{V}[A_j]
\hspace{.5cm}
\mbox{and}
\hspace{.5cm}
\mathcal{V}[A_i] \overset{\mathcal{V}[\gamma_{ij} ]}{\longrightarrow} \mathcal{V}[A_{\infty}]$$
so that, for each $i \in I$, we have that $\mathcal{V}[\iota_i]: \mathcal{V}[A_i] \rightarrow \mathcal{V}[A_{\infty}]$ with the property 
$$\mathcal{V}[\iota_i]=\mathcal{V}[\iota_j] \circ \mathcal{V}[\gamma_{ij}].$$  By the universal property of $\mathcal{V}_{\infty}$, there exists a monoid morphism $\psi: \mathcal{V}_{\infty} \rightarrow \mathcal{V}[A_{\infty}]$ such that
$$\psi \circ \nu_{i, \infty}=\mathcal{V}[\iota_i].$$
We now show that $\psi$ is a monoid isomorphism.

Let $[e] \in \mathcal{V}[A_{\infty}]$.  By Lemma~\ref{l:Matrix-Entries}, $e \in M_n(\iota_i (A_i))$ for some integer $n$ and $i \in I$.  Let $e_{rs}$ denote the $(r,s)$-entry of the matrix $e$.  Then there exists a matrix $f \in M_n (A_i)$ with $(r,s)$-entry $f_{rs}$ such that $\iota_i (f_{rs})=e_{rs}$.   
Then we have 
$$e=\mathcal{V}[\iota_i](f).$$
Since $\mathcal{V}[\iota_i]=\psi \circ \nu_{i, \infty}$, we have that
$$[e]=\psi( \nu_{i, \infty} ([f]))$$
so that $\psi$ is surjective.

By Lemma~\ref{l:equivalent-idempotent}, $\psi$ is injective.  Therefore, $\psi$ is a monoid isomorphism.

%

\end{proof}

We want to show that $\overline{M} ( - )$ is a continuous functor.  Let $\left( ( E_{i} )_{i \in I } , ( \phi_{ij} )_{ i \leq j \text{ in } I} \right)$ be a direct system in \textbf{CKGr}.  For clarity, we write $a_v, ~a_{v, S}$ for elements of $\overline{M}_{E_k}$ and $b_w, ~b_{w, T}$ for elements of $\overline{M}_{E_{\infty}}$.  Note that any CK-morphism sends regular vertices to regular vertices (and infinite emitters to infinite emitters).  Therefore, if $w \in E^0_{\infty}$ is a regular vertex, then there is some $i \in I$ and $v \in E^0_i$ such that $w=\phi^0_{i,\infty}(v)$ and $|s^{-1}_{E_i}(v)|=|s^{-1}_{E_{\infty}}(w)|$.  We often write $b_w=b_{\phi_{i, \infty}(v)}$ or just $ b_w=b_v$ for this situation.

Similarly, for $w \in E^0_{\infty}$ an infinite emitter and $T$ a finite subset of $s^{-1}_{E_{\infty}}(w)$, there is $i \in I$ and $v \in E^0_i$ where $w=\phi^0_{i,\infty}(v)$ and, for every $f \in T$, there is $e \in s^{-1}_{E_i}(v)$ with $f=\phi^1_{i, \infty}(e)$.  For notation convenience, we often take $S$ to be the set of all such $e \in s^{-1}_{E_i}(v)$ and write $b_{w, T}=b_{\phi_{i,\infty}(v), \phi_{i,\infty}(S)}$ or just $b_{w,T}=b_{v, S}$.

\begin{lemma}\label{l:M-isomorphism}
Let $\left( ( E_{i} )_{i \in I } , ( \phi_{ij} )_{ i \leq j \text{ in } I} \right)$ be a direct system in \textbf{CKGr} and let $(E_{\infty},\phi_i)$ be the direct limit of this system.  Let $\left( ( \overline{M}_{E_{i}} )_{ i \in I } , ( \overline{M} ( \phi_{ij} ) )_{ i , j \text{ in } I } \right)$ be the direct system in $\textbf{CMon}_{0}$ and let $(\overline{M}_{\infty}, \mu_{i,\infty})$ be the direct limit of this system.  Set $\mu_{ij} = \overline{M}(\phi_{ij})$.  Then there exists a monoid isomorphism $\ftn{ \psi_{ \overline{M} } }{ \overline{M}_{\infty} }{ \overline{M}_{ E_{ \infty } } }$ such that 
\begin{align*}
\psi_{ \overline{M} } \circ \mu_{i,\infty} = \overline{M} ( \phi_{i, \infty } ).
\end{align*}
\end{lemma}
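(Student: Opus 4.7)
The plan is to construct $\psi_{\overline{M}}$ via the universal property of $\overline{M}_{\infty}$ and then exhibit an explicit inverse. Since $\phi_{i,\infty} = \phi_{j,\infty} \circ \phi_{ij}$ in \textbf{CKGr} for all $i \leq j$, functoriality of $\overline{M}(-)$ yields $\overline{M}(\phi_{i,\infty}) = \overline{M}(\phi_{j,\infty}) \circ \mu_{ij}$. Lemma~\ref{l:direct-limits-monoids} then supplies a unique monoid morphism $\psi_{\overline{M}} : \overline{M}_{\infty} \to \overline{M}_{E_{\infty}}$ with $\psi_{\overline{M}} \circ \mu_{i,\infty} = \overline{M}(\phi_{i,\infty})$ for every $i \in I$. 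The substance of the lemma is to prove that $\psi_{\overline{M}}$ is bijective.

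To do this, I would construct an inverse $\chi : \overline{M}_{E_{\infty}} \to \overline{M}_{\infty}$ on generators. For $w \in E_{\infty}^{0}$, choose $i \in I$ and $v \in E_{i}^{0}$ with $\phi_{i,\infty}^{0}(v) = w$ (available by the description of direct limits in \textbf{CKGr}), and set $\chi(b_{w}) = \mu_{i,\infty}(a_{v})$. For $b_{w,T}$ with $w$ an infinite emitter of $E_{\infty}$ and $T$ a finite subset of $s_{E_{\infty}}^{-1}(w)$, choose $i$ large enough that $w = \phi_{i,\infty}^{0}(v)$ and $T = \phi_{i,\infty}^{1}(S)$ for some $v \in E_{i}^{0}$ and finite $S \subseteq s_{E_{i}}^{-1}(v)$; since CK-morphisms send infinite emitters to infinite emitters, $v$ is automatically an infinite emitter, so $a_{v,S}$ is a legitimate generator, and set $\chi(b_{w,T}) = \mu_{i,\infty}(a_{v,S})$.

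Well-definedness of $\chi$ follows from the injectivity conditions on CK-morphisms: if $(i,v)$ and $(j,v')$ both realise $w$, any upper bound $k \geq i,j$ gives $\phi_{k,\infty}^{0}(\phi_{ik}^{0}(v)) = w = \phi_{k,\infty}^{0}(\phi_{jk}^{0}(v'))$, so $\phi_{ik}^{0}(v) = \phi_{jk}^{0}(v')$ by injectivity, whence $\mu_{i,\infty}(a_{v}) = \mu_{j,\infty}(a_{v'})$, and an analogous argument using injectivity on edges handles $b_{w,T}$. To see that $\chi$ respects the defining relations of $\overline{M}_{E_{\infty}}$, observe that for a regular vertex $w$ one can enlarge $i$ so that all (finitely many) edges out of $w$ lie in the image of $\phi_{i,\infty}^{1}$; the preimage $v$ is then neither a sink (it has an out-edge) nor an infinite emitter (else $w$ would be one), so by the CK-morphism condition $\phi_{i,\infty}^{1}$ restricts to a bijection $s_{E_{i}}^{-1}(v) \to s_{E_{\infty}}^{-1}(w)$, and the regular relation in $\overline{M}_{E_{i}}$ transports through $\mu_{i,\infty}$. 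For the infinite-emitter relations, pull both finite edge-sets back to a common $E_{k}$ and invoke the relations already holding in $\overline{M}_{E_{k}}$.

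I expect the main obstacle to be this last step, particularly the compatibility relation $a_{v,S} + \sum_{e \in S \setminus T} a_{r(e)} = a_{v,T} + \sum_{e \in T \setminus S} a_{r(e)}$: one must choose a single $k \in I$ large enough to realise both $T$ and $T'$ simultaneously, and then identify $\chi$ applied to each side with the image under $\mu_{k,\infty}$ of a relation holding in $\overline{M}_{E_{k}}$. Granting this, $\chi$ extends to a monoid morphism $\overline{M}_{E_{\infty}} \to \overline{M}_{\infty}$, and checking on generators shows $\chi \circ \psi_{\overline{M}} \circ \mu_{i,\infty} = \mu_{i,\infty}$ for all $i$ (so $\chi \circ \psi_{\overline{M}} = \id$ by the uniqueness part of the universal property) and $\psi_{\overline{M}} \circ \chi = \id$ on the generators of $\overline{M}_{E_{\infty}}$. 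This yields $\psi_{\overline{M}}^{-1} = \chi$ and completes the proof.
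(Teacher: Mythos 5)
Your proposal is correct and follows essentially the same route as the paper: construct $\psi_{\overline{M}}$ from the universal property of $\overline{M}_{\infty}$, then build the inverse on generators of $\overline{M}_{E_{\infty}}$ (the paper's $\overline{\theta}$ is your $\chi$), checking independence of the choice of index via injectivity of CK-morphisms and verifying the three defining relations by pulling everything back to a common $E_{k}$. The only cosmetic difference is that the paper factors $\overline{\theta}$ explicitly through the free monoid on the generators and concludes injectivity from $\overline{\theta}\circ\psi_{\overline{M}}=\operatorname{id}$ together with the separately observed surjectivity, whereas you verify both composites; also note that the reason the preimage $v$ of an infinite emitter $w$ is itself an infinite emitter is that $v$ is neither a sink (it emits $S\neq\emptyset$) nor regular (else the CK-bijection on out-edges would force $w$ to be regular), rather than the forward implication you cite.
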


\begin{proof}
For any $i\in I$, the CK-morphism $\phi_{i, \infty}$ induces a monoid morphism $\overline{M}(\phi_{i, \infty}) : \overline{M}_{E_i}  \rightarrow  \overline{M}_{ E_{ \infty } } $ given by
$$\overline{M}(\phi_{i, \infty})(a_v)=b_{\phi_{i, \infty}(v)}~~~~\mbox{and}~~~~\overline{M}(\phi_{i, \infty})(a_{v,S})=b_{\phi_{i, \infty}(v), \phi_{i, \infty}(S)}.$$

It follows easily that $\overline{M}(\phi_{i,\infty}) =  \overline{M}(\phi_{j, \infty}) \circ \mu_{ij}$.  That there exists a surjective monoid morphism $\ftn{ \psi_{ \overline{M} } }{ \overline{M}_{\infty} }{ \overline{M}_{ E_{ \infty } } }$ is seen from the definition of $\overline{M}_{\infty}$ as the direct limit of the system $\left( ( \overline{M}_{E_{i}} )_{ i \in I } , ( \overline{M} ( \phi_{ij} ) )_{ i , j \text{ in } I } \right)$.  

Showing that $ \psi_{ \overline{M} }$ is injective is the remainder of the proof.  Our strategy is as follows.  We define a monoid morphism $\theta: \overline{M}_{0} 
\rightarrow \overline{M}_{\infty}$ on the free abelian monoid $\overline{M}_0$ generated by the same generators of $\overline{M}_{E_\infty}$, but without any relations.  We then use $\theta$ to define
a monoid morphism $\overline{\theta}: \overline{M}_{ E_{ \infty } } \rightarrow \overline{M}_{\infty}$ such that $\overline{\theta} \circ \psi_{\overline{M}}$ is the identity map on $\overline{M}_{\infty}$.  It then follows that $\psi_{ \overline{M}}$ is injective

We define $\theta$ on $\overline{M}_0$ as follows.  For a generator of the form $b_w \in \overline{M}_{0}$ with $w \in E^0_{\infty}$,
let 
$$\theta(b_w) = \mu_{i,\infty}(a_v),$$ 
where $w = \phi_{i,\infty}(v)$ for some $i \in I$ and $v \in E_i^0$.

For a generator of the form $b_{w,T} \in \overline{M}_{0}$, with $w \in E^0_{\infty}$, and $T$ a finite subset of $s^{-1}_{E_{\infty}} (w)$, there is an $i \in I$ 
with $v \in E^0_i$ and a finite subset $S \subseteq s^{-1}_{E_i}(v)$ with $\phi_{i, \infty} (S)=T$. Define the map $\theta$ on $b_{w,T}$ by
$$\theta(b_{w,T})=\mu_{i, \infty} (a_{v, S}).$$

We now show that ${\theta}$ is well-defined.

Since there are no relations on $\overline{M}_0$, we only need to show that $\theta$ does not depend on our choice of $i \in I$.  First consider the generator $b_w \in \overline{M}_{0}$.  Suppose $w=\phi_{i,\infty}(v) = \phi_{j,\infty}(v')$ for some $i, j \in I$, $v \in E_i^0$ and $v' \in E_j^0$.  Without loss of generality, we may assume $i \le j$.  Since $\phi_{i,\infty}(v) = \phi_{j,\infty}(v')$, 
we must have $\phi^0_{ij}(v) = v'$, so that $a_{\phi_{ij}(v)} = a_{v'}$.  
Thus, $\mu_{ij}(a_v) = \overline{M}(\phi_{ij})(a_v) = a_{v'}$ and it follows that $\mu_{i,\infty}(a_v) = \mu_{j,\infty}(a_{v'})$. 

 A similar approach works for the generator $b_{w, T} \in \overline{M}_{0}$.  Suppose $w=\phi_{i,\infty}(v) = \phi_{j,\infty}(v')$ and $T=\phi_{i,\infty}(S)=\phi_{j,\infty}(S')$  for some $i, j \in I$, $v \in E_i^0$ and $v' \in E_j^0$.  Without loss of generality, we may assume $i \le j$.  Since $\phi_{i,\infty}(v) = \phi_{j,\infty}(v')$, 
we must have $\phi^0_{ij}(v) = v'$.  Similarly, $\phi^1_{i,\infty}(S)=\phi^1_{j,\infty}(S')$, so for every $e \in S$ we have $e' \in S'$ such that
$\phi^1_{ij}(e) =e'$.  We denote this succinctly by $\phi_{ij}(S) = S'$.  Then $\overline{M}(\phi_{ij})(a_{v, S})=a_{\phi_{ij}(v), \phi_{ij}(S)} = a_{v', S'}$ and it follows that $\mu_{i,\infty}(a_{v,S}) = \mu_{j,\infty}(a_{v', S'})$.  So the map $\theta$ does not depend upon a choice of $i \in I$.

We now see that $\theta$ is well-defined on the generators.  Now extend additively to give a map $\overline{M}_{0} \rightarrow \overline{M}_{\infty}$, e.g.,
\begin{align*}
  {\theta} \left(\sum b_{w_{j}} +\sum b_{w_{k}, T_k} \right)&=
\sum \theta(b_{w_{j}}) +\sum \theta(b_{w_{k}, T_k}) \\ \\
&=
\sum \mu_{i,\infty}(a_{v_{j}}) +\sum \mu_{i, \infty} (a_{v_{k}, S_k}).
\end{align*}

Our next goal is to essentially quotient out by the kernel of $\theta$ to obtain a monoid morphism $\overline{\theta}$ on $\overline{M}_{E_\infty}$.  For this to work we need to
show that the kernel of $\theta$ contains our relations on $\overline{M}_{E_\infty}$.  To this end,
let $R$ denote the set of relations (defined on the generators) distinguishing $\overline{M}_{E_\infty}$ from $\overline{M}_0$, and let $\rho$ denote the equivalence relation 
generated by $R$,  so that $\overline{M}_{E_\infty} \cong \overline{M}_0/\rho$.  Note that one typically does not distinguish between $R$ and $\rho$, but we are being overly
cautious in our treatment here.  When thinking of these as ordered pairs, technically, we have $R \subseteq \rho$.  Let $Q$ be the natural quotient map taking a word $x$ in $\overline{M}_0$ to its equivalence class in $\overline{M}_{E_\infty}$.  Thinking of $\ker \theta$ as a collection of ordererd pairs $(x,y)$ where $\theta(x) = \theta (y)$, we now wish to show that $\rho \subseteq \ker \theta$.

Let $x$ and $y$ be words in $\overline{M}_0$ such that $x = y$ in $\overline{M}_{E_\infty}$, that is $(x,y) \in \rho$, or equivalently, $Q(x) = Q(y)$.  Then there
exists a finite sequence of words $x_1, \dots ,x_{k+1}$ in $\overline{M}_0$ with $x_1 = x$ and $x_{k+1} = y$ such that
$x_{i+1}$ is obtained from $x_i$ by substituting a term $z_i$ of $x_i$ by $y_i$ for some $y_i$ in $\overline{M}_0$ such that ``$z_i = y_i$''
is one of the relations in $R$ ($(z_i,y_i) \in R$).  Now, if $\theta(z_i) = \theta(y_i)$  for every such pair in $R$, then by transitivity we will have $\theta(x) = \theta(y)$.
Thus, to conclude that $\rho \subseteq \ker \theta$, it now suffices to show that $\theta$ respects the three forms of relations found in $R$.  

We first consider relations in $R$ of the form
$b_w=\sum_{f \in s^{-1}_{E_{\infty}}(w)} b_{r (f)}$ for a regular vertex $w \in E^0_{\infty}$. In this case, we have $w =\phi_{i, \infty}(v)$ for some $v \in E^0_i$ and
$\theta(b_w)=\mu_{i, \infty}(a_v)$.  For some choice $\ell \in I$ we have $w=\phi_{\ell, \infty}(v')$ and $|s^{-1}_{E_{\infty}}(w)|=|s^{-1}_{E_{\ell}}(v')|$ so that
\begin{align*}
{\theta} \left(  \sum_{f \in s^{-1}_{E_{\infty}}(w)} b_{r (f)}\right)&=
\sum_{f \in s^{-1}_{E_{\infty}}(w)} \theta(b_{r (f)})=
\sum_{e \in s^{-1}_{E_{\ell}}(v')} \mu_{\ell, \infty} (a_{r (e)} ).
\end{align*}
Since $\mu_{\ell, \infty}$ is a monoid morphism and $v'$ a regular vertex, this in turn equals
\begin{align*}
 \mu_{\ell, \infty} \left( \sum_{e \in s^{-1}_{E_{\ell}}(v')} a_{r (e)} \right)=
  \mu_{\ell, \infty} (a_{v'} ).
\end{align*}
As argued previously, we must have $  \mu_{\ell, \infty} (a_{v'} )=\mu_{i, \infty}(a_v)$, so ${\theta}$ respects the relation defined on finite emitters.

Next consider the relations on infinite emitters in $R$ of the form $b_w=b_{w, T} +\sum_{f \in T} b_{r(f)}$ for an infinite emitter $w$.
Since CK-morphisms map infinite emitters to infinite emitters, we must have some $i \in I$ and infinite emitter $v \in E^0_i$ such that ${\theta}(b_w)=\mu_{i, \infty}(a_v)$.
Now consider ${\theta}\left(b_{w, T} +\sum_{f \in T} b_{r (f)}\right)=
\theta(b_{w, T}) +\sum_{f \in T} \theta(b_{r (f)})$.  As before, there is an $\ell \in I$ and an infinite emitter $v' \in E^0_{\ell}$ with $w=\phi_{\ell, \infty}(v')$.  Since $T$ is finite, we also must have a finite set $S \subseteq s^{-1}_{E_{\ell}}(v')$ such that $T=\phi^1_{\ell, \infty}(S)$.  Then

\begin{align*}
\theta(b_{w, T}) +\sum_{f \in T} \theta(b_{r (f)})&=
\mu_{\ell, \infty} (a_{v', S}) +\sum_{e \in S} \mu_{\ell, \infty}(a_{r (e)})=
\mu_{\ell, \infty} \left(a_{v', S}+\sum_{e \in S}a_{r (e)} \right).
\end{align*}
Since $v \in E^0_{\ell}$ is an infinite emitter with finite subset $S \subseteq s^{-1}_{E_{\ell}}(v')$, we must have $a_{v', S}+\sum_{e \in S}a_{r (e)}=a_{v'}$ and 
$${\theta} \left(b_{w, T} +\sum_{f \in T} b_{r (f)} \right)=\mu_{\ell, \infty} (a_{v'}).$$
Again, because ${\theta}$ does not depend upon the choice of index, we must have
$${\theta}(b_w)={\theta} \left(b_{w, T} +\sum_{f \in T} b_{r (f)} \right).$$

Lastly, we consider the relations in $R$ of the form
$$b_{w,T} + \sum_{ f \in T \setminus T' }  b_{ r_{E_\infty} (f) }= b_{w,T'} + \sum_{ f \in T'\setminus T } b_{ r_{E_\infty} (f) }.$$
Again, since CK-morphisms map infinite emitters to infinite emitters, there is some $i \in I$ and infinite emitter $v \in E^0_i$ such that ${\theta}(b_w)=\mu_{i, \infty}(a_v)$.
Now consider 
\begin{align*}
{\theta}\left(b_{w, T} +\sum_{f \in T \setminus T'} b_{r (f)}\right)=
\theta(b_{w, T}) +\sum_{f \in T \setminus T'} \theta(b_{r (f)}).
\end{align*}
As before, there is an $\ell \in I$ and an infinite emitter $v' \in E^0_{\ell}$ with $w=\phi_{\ell, \infty}(v')$.  Since $T$ and $T'$ are finite, we also must have finite sets $S,S' \subseteq s^{-1}_{E_{\ell}}(v')$ such that $T=\phi^1_{\ell, \infty}(S)$ and $T'=\phi^1_{\ell, \infty}(S')$.  
Then
\begin{align*}
\theta(b_{w, T}) +\sum_{f \in T \setminus T'} \theta(b_{r (f)})&=
\mu_{\ell, \infty} (a_{v', S}) +\sum_{e \in S \setminus S'} \mu_{\ell, \infty}(a_{r (e)}) =
\mu_{\ell, \infty} \left(a_{v', S}+\sum_{e \in S \setminus S'}a_{r (e)} \right) \\ \\
&=
\mu_{\ell, \infty} \left(a_{v', S}+\sum_{e \in S' \setminus S}a_{r (e)} \right) = 
\theta(b_{w, T}) +\sum_{f \in T' \setminus T} \theta(b_{r (f)}).
\end{align*}
Once again, we see that this relation is preserved by $\theta$ and we conclude that indeed $\rho \subseteq \ker \theta$.


Finally we can define $\overline{\theta}:\overline{M}_{E_\infty} \rightarrow \overline{M_\infty}$ by $\overline{\theta}(Q(x)) = \theta(x).$  This is well-defined since if $Q(x) = Q(y)$, we have
$(x,y) \in \rho \subseteq \ker \theta$ so that $\theta(x) = \theta(y)$.

Now, if $\overline{\theta} \circ \psi_{\overline{M}}$ equals the identity map on $\overline{M}_{\infty}$, 
then $\psi_{\overline{M}}$ is injective.  Since $\overline{M}_{\infty}$ is generated by the set
$$\left\{\mu_{i,\infty}(a_v):v \in E_i^0\} \cup \{\mu_{i,\infty}(a_{v,S}): v \mbox{ an inf.
 emitter, fin. non-empty }S \subseteq s_{E_i}^{-1}(v)\right\},$$
we need only check the above condition on this set of generators.  In this direction, suppose that
$\mu_{i,\infty}(a_v) \in \overline{M}_{\infty}$, where $v \in E_i$ for some $i \in I$.  Then
\begin{eqnarray*}\overline{\theta}(\psi_{\overline{M}}(\mu_{i,\infty}(a_v))  =  \overline{\theta}(\overline{M}(\phi_{i,\infty})(a_v)) =  \overline{\theta}(b_{\phi_{i,\infty}(v)})  =  \mu_{i,\infty}(a_v).
\end{eqnarray*}

Similarly,
suppose that
$\mu_{i,\infty}(a_{v,S}) \in \overline{M}_{\infty}$, where $v \in E_i$ for some $i \in I$ and $S$ is a finite non-empty
subset of $s_{E_i}^{-1} ( v )$.  Then
\begin{eqnarray*}\overline{\theta}(\psi_{\overline{M}}(\mu_{i,\infty}(a_{v,S})) &  = & \overline{\theta}(\overline{M}(\phi_{i,\infty})(a_{v,S})) =  \overline{\theta}(b_{\phi_{i,\infty}(v), \phi_{i,\infty}(S)})  =  \mu_{i,\infty}(a_{v,S}).
\end{eqnarray*}
Thus, $\psi_{\overline{M}}$ is injective and $\psi_{\overline{M}}: \overline{M}_{\infty} \longrightarrow \overline{M}_{E_{\infty}}$ is a monoid isomorphism.
\end{proof}

 \begin{lemma}\label{l:commutative-CK-morphism}
 Let $\eta: E \rightarrow F$ be a CK-morphism.  For the maps ${\gamma}_E$, ${\gamma}_F$ defined in Lemma~\ref{l:naturalhom}, and the maps $\overline{M}[\eta]$ and $(\mathcal{V} \circ L_K) [\eta ]$ induced by the functors $\overline{M}$ and $(\mathcal{V} \circ L_K)$, respectively, the diagram
\begin{align*}
\xymatrix{
\overline{M}_E \ar[r]^-{ {\gamma}_E } \ar[d]_{ \overline{M}[\eta] } & \mathcal{V}[ L_K(E)] \ar[d]^{(\mathcal{V} \circ L_K) [\eta]} \\
\overline{M}_F \ar[r]_-{ {\gamma}_F } &  \mathcal{V}[ L_K(F)]
}
\end{align*}
is commutative.
\end{lemma}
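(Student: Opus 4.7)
The plan is to verify the commutativity of the diagram by checking it on the generators of $\overline{M}_E$, namely $a_v$ for $v \in E^0$ and $a_{v,S}$ for $v$ an infinite emitter and $S$ a finite non-empty subset of $s_E^{-1}(v)$. Since every element of $\overline{M}_E$ is a finite sum of such generators and all four maps in the square are monoid morphisms, equality on generators forces equality on all of $\overline{M}_E$.

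First I would unpack the definitions of the two vertical maps on the generators. For the functor $\overline{M}$, the CK-morphism $\eta$ induces the morphism sending $a_v \mapsto a_{\eta(v)}$ and, using the fact that CK-morphisms send infinite emitters to infinite emitters (so $\eta(v)$ is again an infinite emitter when $v$ is) and are injective on edges, $a_{v,S} \mapsto a_{\eta(v), \eta(S)}$. For the functor $\mathcal{V} \circ L_K$, the universal property of Leavitt path algebras yields a $K$-algebra homomorphism $L_K(\eta)$ sending the generators $p_v \mapsto p_{\eta(v)}$, $t_e \mapsto t_{\eta(e)}$ and $t_e^* \mapsto t_{\eta(e)}^*$; applying $\mathcal{V}[\,\cdot\,]$ then gives a monoid morphism $[e] \mapsto [L_K(\eta)(e)]$.

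Now I would chase the two generators around the square. For a generator $a_v$ with $v \in E^0$: going right-then-down yields $[p_v] \mapsto [L_K(\eta)(p_v)] = [p_{\eta(v)}]$, while going down-then-right yields $a_{\eta(v)} \mapsto [p_{\eta(v)}]$ by definition of $\gamma_F$. For a generator $a_{v,S}$ with $v$ an infinite emitter: going right-then-down gives
\begin{align*}
\gamma_E(a_{v,S}) = \bigl[p_v - \sum_{e \in S} t_e t_e^*\bigr] \longmapsto \bigl[p_{\eta(v)} - \sum_{e \in S} t_{\eta(e)} t_{\eta(e)}^*\bigr] = \bigl[p_{\eta(v)} - \sum_{f \in \eta(S)} t_f t_f^*\bigr],
\end{align*}
while going down-then-right gives $a_{\eta(v), \eta(S)} \mapsto [p_{\eta(v)} - \sum_{f \in \eta(S)} t_f t_f^*]$ by Lemma~\ref{l:naturalhom} applied in $L_K(F)$. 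These agree.

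There isn't really a hard step here; the whole argument is bookkeeping. The only minor subtlety is ensuring the image under $\overline{M}[\eta]$ is actually a well-defined generator of $\overline{M}_F$, which requires that $\eta$ preserves the ``infinite emitter'' property and that $\eta|_{E^1}$ is injective so that $\eta(S)$ is a finite non-empty subset of $s_F^{-1}(\eta(v))$ of the same cardinality as $S$; both facts are built into the definition of a CK-morphism. Once this is noted, commutativity on generators is a line-by-line verification, and the lemma follows.
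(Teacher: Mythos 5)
Your proposal is correct and follows essentially the same route as the paper's proof: both verify commutativity on the generators $a_v$ and $a_{v,S}$ by unpacking the induced maps $\overline{M}[\eta]$ and $(\mathcal{V}\circ L_K)[\eta]$ on generators (the latter via the algebra homomorphism $L_K(\eta)$ from Lemma~2.5 of \cite{kg:leavittlimits}), and both invoke the CK-morphism properties to see that $\eta(v)$ is an infinite emitter and $\eta(S)$ is a finite subset of $s_F^{-1}(\eta(v))$ of the same cardinality as $S$. No substantive differences.
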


\begin{proof}
For graphs $E=(E^0, ~E^1, ~r_E,~s_E)$ and $F=(F^0, ~F^1, ~r_F,~s_F)$, let $\{p_v, t_e, t_e^*:~~v \in E^0,~e \in E^1\}$ and $\{q_w, u_f, u_f^*:~~w \in F^0,~f \in F^1\}$ be the Leavitt $E$-family and $F$-family, respectively, generating $L_K(E)$ and $L_K(F)$, respectively.

Similarly, suppose $\overline{M}_E$ and $\overline{M}_F$ are generated by elements
 $$\{ a_v :~v \in E^{0} \} 
 \cup \{a_{v, S}:~v \in E^0~\mbox{$v$ an inf. emitter, finite $\emptyset \neq S \subseteq s_E^{-1}(v)$} \}$$
 and
 $$\{ b_w :~w \in F^0\} 
 \cup \{b_{w,T}:~v \in F^0~\mbox{$w$ an inf. emitter, finite $\emptyset \neq T \subseteq s_F^{-1}(w)$} \},$$
respectively. 

Let $v \in E^0$ be a non-singular vertex.  By the definition of ${\gamma}$, ${\gamma}_E ([a_v])=[p_v]$.  Then by Lemma~2.5 of \cite{kg:leavittlimits},  using the functor $(\mathcal{V} \circ L_K)$, we have
$$(\mathcal{V} \circ L_K) [\eta] ([ p_v ] )= [ q_{\eta(v)} ].$$
Hence, $(\mathcal{V} \circ L_K) [\eta]) \circ {\gamma}_E ([a_v])=[q_{\eta(v)}]$ for $v$ non-singular.

Similarly, the functor $\overline{M}$ induces the map $\overline{M}[\eta]$ defined for non-singular $v$ by $\overline{M}[\eta]([a_v])=[b_{\eta(v)}]$.  Then since
$$\overline{\gamma}_F ([b_{\eta(v)}])= [ q_{\eta(v)} ],$$
we have $((\mathcal{V} \circ L_K) [\eta] \circ \overline{\gamma}_E) ([a_v])=(\overline{\gamma}_F  \circ \overline{M}[\eta])([a_v])$ for non-singular $v \in E^0$.

Now assume $v \in E^0$ is an infinite emitter.  Since $\eta:E \rightarrow F$ is a CK-morphism, we also have $\eta(v) \in F^0$ is an infinite emitter.  For each element $e_{i_k}$ of a finite subset $S$ of  $s_E^{-1}(v)$, let $f_{i_k}=\eta^1(e_{i_k})$ and set
$T_{v, S}=\{f_{i_1}, \ldots, f_{i_n}\}$.  Note that the cardinality of $T_{v, S}$ equals that of $S$ because of property (1) of CK-morphisms.

By the definition of ${\gamma}_{E}$, for an infinite emitter $v \in E^0$ and $S$, a non-empty finite subset of $s_E^{-1} (v)$, we have
${\gamma}_E ([a_{v, S}])=[p_v-\sum_{e \in S} t_e t_e^*]$.  Again by Lemma~2.5 of \cite{kg:leavittlimits},
$$(\mathcal{V} \circ L_K) [\eta] \left( p_v-\sum_{e \in S} t_e t_e^*\right)=
q_{\eta(v)}-\sum_{\eta(e) \in \eta(S)} u_{\eta(e)} u_{\eta(e)}^*.$$
However, $\eta(e)=f \in T_{ v, S}$.
Hence,
$$(\mathcal{V} \circ L_K) [\eta]  \circ {\gamma}_E ([a_{v, S}])=\left[q_{\eta(v)}-\sum_{f \in T_{v,S } } u_{f} u_{f}^* \right].$$

Also, for $v \in E^0$ an infinite emitter and $S$ a non-empty finite subset of $s_E^{-1} (v)$, the induced map $\overline{M}[\eta]$ is defined by
$$\overline{M}[\eta] ([a_{v,S}])=[b_{w, T}]$$
where $w=\eta(v)$ and $T=\{f_{i_1}, \ldots f_{i_n} \}$ is the finite subset of $s_F^{-1}(w)$ such that 
$f_{i_k}=\eta(e_{i_k})$.  So $\overline{M}_E ([a_{v, S}])=[b_{\eta(v), T_{S,v} }]$.  Then by the definition of ${\gamma}_{F}$
$${\gamma}_F ([b_{\eta(v), T_{S,v}}])=
\left[ q_{\eta(v)}-\sum_{f \in T_{S,v} } u_f u_f^* \right].$$
It follows then that $(\mathcal{V} \circ L_K) [\eta]  \circ {\gamma}_E={\gamma}_F \circ \overline{M}[\eta]$ for all generators of $\overline{M}_E$.  Hence the diagram above commutes.

\end{proof}

We are now ready to use the results of the previous section to show that the monoid morphism $\gamma_{E}$ given in Lemma~\ref{l:naturalhom} is a monoid isomorphism for an arbitrary graph.  

\begin{theorem}\label{t:iso-arbitary}
Let $E$ be an arbitrary graph and let $K$ be a field.  Then $\ftn{ \gamma_{E} }{ \overline{M}_{E} }{ \mathcal{V} [ L_{K} (E) ] }$ is a monoid isomorphism.
\end{theorem}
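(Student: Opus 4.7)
The plan is to bootstrap from the countable case (Theorem~\ref{t:iso-countable}) to the general case by expressing an arbitrary graph $E$ as a direct limit of countable graphs in \textbf{CKGr} and invoking the continuity of both $\overline{M}(-)$ and $\mathcal{V}[L_K(-)]$ established in the previous lemmas. Concretely, by Lemma~2.5 of \cite{kg:leavittlimits}, every graph $E$ is the direct limit in \textbf{CKGr} of a directed system $\left( (E_i)_{i \in I}, (\phi_{ij})_{i \leq j \text{ in } I} \right)$ of countable subgraphs, with connecting maps $\phi_{i,\infty} : E_i \to E$ that are CK-morphisms.

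Apply the two continuity results to this system. First, Lemma~\ref{l:M-isomorphism} yields a monoid isomorphism $\psi_{\overline{M}} : \overline{M}_{\infty} \to \overline{M}_E$, where $\overline{M}_{\infty}$ is the direct limit in $\textbf{CMon}_0$ of $\left( \overline{M}_{E_i}, \overline{M}(\phi_{ij}) \right)$, and $\psi_{\overline{M}} \circ \mu_{i,\infty} = \overline{M}(\phi_{i,\infty})$ for all $i$. Second, Lemma~\ref{l:non-stable-iso} yields a monoid isomorphism $\psi_{\mathcal{V}} : \mathcal{V}_{\infty} \to \mathcal{V}[L_K(E)]$, where $\mathcal{V}_{\infty}$ is the direct limit of $\left( \mathcal{V}[L_K(E_i)], \mathcal{V}[L_K(\phi_{ij})] \right)$, satisfying $\psi_{\mathcal{V}} \circ \nu_{i,\infty} = \mathcal{V}[L_K(\phi_{i,\infty})]$.

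Next, Lemma~\ref{l:commutative-CK-morphism} applied to each $\phi_{ij}$ tells us that $\gamma_{E_j} \circ \overline{M}(\phi_{ij}) = \mathcal{V}[L_K(\phi_{ij})] \circ \gamma_{E_i}$, so the family $(\gamma_{E_i})_{i \in I}$ constitutes a morphism of direct systems from $\left( \overline{M}_{E_i}, \overline{M}(\phi_{ij}) \right)$ to $\left( \mathcal{V}[L_K(E_i)], \mathcal{V}[L_K(\phi_{ij})] \right)$. Since each $E_i$ is countable, Theorem~\ref{t:iso-countable} gives that every $\gamma_{E_i}$ is a monoid isomorphism. Lemma~\ref{l:iso-direct-limits-monoids} then produces an induced monoid isomorphism $\gamma_{\infty} : \overline{M}_{\infty} \to \mathcal{V}_{\infty}$ characterized by $\gamma_{\infty} \circ \mu_{i,\infty} = \nu_{i,\infty} \circ \gamma_{E_i}$ for all $i$.

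To finish, I will verify that the composite $\psi_{\mathcal{V}} \circ \gamma_{\infty} \circ \psi_{\overline{M}}^{-1} : \overline{M}_E \to \mathcal{V}[L_K(E)]$ agrees with $\gamma_E$ on generators. For $w = \phi_{i,\infty}(v)$ with $v \in E_i^0$ we have $\psi_{\overline{M}}^{-1}(a_w) = \mu_{i,\infty}(a_v)$, and Lemma~\ref{l:commutative-CK-morphism} applied to $\phi_{i,\infty}$ itself (together with functoriality of $\mathcal{V}[L_K(-)]$) gives that both sides send $a_w$ to $[p_w]$; a parallel computation handles generators of the form $a_{w,T}$, reducing via the CK-morphism $\phi_{i,\infty}$ to a generator $a_{v,S}$ downstairs. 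Since all three of $\psi_{\overline{M}}$, $\gamma_{\infty}$, and $\psi_{\mathcal{V}}$ are monoid isomorphisms, it follows that $\gamma_E$ is too. The main obstacle is not conceptual but bookkeeping: one must carefully track how CK-morphisms transport the generators $a_{v,S}$, and ensure that in the commutativity check against $\gamma_E$ the ambiguity in the choice of preimage index $i$ (which is what forces the delicate well-definedness argument in Lemma~\ref{l:M-isomorphism}) does not reintroduce itself; happily, the universal property in Lemma~\ref{l:iso-direct-limits-monoids} makes this identification automatic once compatibility on generators is verified.
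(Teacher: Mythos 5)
Your proposal is correct and follows essentially the same route as the paper: write $E$ as a direct limit of countable graphs in $\textbf{CKGr}$, use Lemma~\ref{l:M-isomorphism} and Lemma~\ref{l:non-stable-iso} for continuity of $\overline{M}(-)$ and $\mathcal{V}[L_K(-)]$, feed the countable case (Theorem~\ref{t:iso-countable}) and the naturality square (Lemma~\ref{l:commutative-CK-morphism}) into Lemma~\ref{l:iso-direct-limits-monoids}, and identify $\gamma_E$ with the composite $\psi_{\mathcal{V}} \circ \gamma_{\infty} \circ \psi_{\overline{M}}^{-1}$ by checking compatibility via $\phi_{i,\infty}$. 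The only cosmetic difference is that the paper cites Proposition~2.7 of \cite{kg:leavittlimits} (which also supplies $L_K(E) = \varinjlim L_K(E_i)$) rather than Lemma~2.5, and verifies the final identity on arbitrary elements pulled back to some $\overline{M}_{E_i}$ rather than on generators.
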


\begin{proof}
By Proposition~2.7 of \cite{kg:leavittlimits}, there exists a direct system $( (E_{i})_{ i \in I } , ( \phi_{ij} )_{ i ,j \in I } )$ in \textbf{CKGr} such that $E = \varinjlim ( E_{i} , \phi_{i,j} )$, $E_{i}$ are countable graphs, and 
\begin{align*}
L_{K} (E) = \varinjlim ( L_{K} ( E_{i} ) , L_{K} ( \phi_{ij} ) ).
\end{align*}
By Lemma~\ref{l:commutative-CK-morphism}, for each $i,j \in I$ with $i \leq j$, the diagram
\begin{align*}
\xymatrix{
\overline{M}_{E_{i}} \ar[d]_-{ \gamma_{E_{i}} } \ar[rr]^-{ \overline{M} [ \phi_{ij} ] }  & &\overline{M}_{E_{j} } \ar[d]^-{\gamma_{ E_{j} } }  \\
\mathcal{V} [ L_{K} ( E_{i} ) ] \ar[rr]_{ \mathcal{V} \circ L_{K} [ \phi_{ij} ] } & & \mathcal{V} [ L_{K} ( E_{j} ) ]
} 
\end{align*}
is commutative.  By Theorem~\ref{t:iso-countable}, $\gamma_{E_{i}}$ is a monoid isomorphism for all $i \in I$.  By Lemma~\ref{l:iso-direct-limits-monoids}, there exists a monoid isomorphism 
$\ftn{ \psi }{ \overline{M}_{\infty} }{ \mathcal{V}_{\infty} }$ such that $\nu_{i, \infty } \circ \gamma_{ E_{i} } = \psi \circ \mu_{ i , \infty }$,
where $\nu_{ij} = \mathcal{V} [ L_{K} (\phi_{ij}) ]$ and $\mu_{ ij } = \overline{M} ( \phi_{ij} )$.  

By Lemma~\ref{l:non-stable-iso} and Lemma~\ref{l:M-isomorphism}, there exist monoid isomorphisms
\begin{align*}
\ftn{\psi_{ \mathcal{V} } }{ \mathcal{V}_{ \infty } }{ \mathcal{V} [ L_{K} ( E) ] } \quad \text{and} \quad
\ftn{ \psi_{ \overline{M} } }{ \overline{M}_{\infty} }{ \overline{M}_{E} }
\end{align*} 
such that 
\begin{align*}
\psi_{ \mathcal{V} } \circ \nu_{ i, \infty } = \mathcal{V} [ L_{K} (\phi_{i,\infty}) ] \quad \text{and} \quad
\psi_{ \overline{M} } \circ \mu_{i, \infty } = \overline{M} ( \phi_{i, \infty } ).
\end{align*}

We claim that $\gamma_{E} = \psi_{ \mathcal{V} } \circ \psi \circ \psi_{ \overline{M} }^{-1}$.  First note that by Lemma~\ref{l:commutative-CK-morphism}, the diagram
\begin{align*}
\xymatrix{
\overline{M}_{E_{i}} \ar[rr]^-{ \overline{M} [ \phi_{i,\infty} ] } \ar[d]_{\gamma_{E_{i} } } & & \overline{M}_{E} \ar[d]^{ \gamma_{E}}\\
\mathcal{V} [ L_{K} ( E_{i} ) ] \ar[rr]_-{ \mathcal{V} [ L_{K} ( \phi_{i, \infty  } ) ]  } & & \mathcal{V} [ L_{K} ( E) ]
}
\end{align*}
is commutative.  Let $a \in \overline{M}_{E_{\infty}}$.  Then there exists $a_{1} \in \overline{M}_{E_{i}}$ such that $\mu_{i, \infty } ( a_{1} ) = \psi_{ \overline{M} }^{-1} ( a )$.  Then 
\begin{align*}
\psi_{ \mathcal{V} } \circ \psi \circ \psi_{ \overline{M} }^{-1} ( a ) &= \psi_{ \mathcal{V} }^{-1} \circ \psi \circ \mu_{ i, \infty } ( a_{1} ) = \psi_{ \mathcal{V} } \circ \nu_{ i , \infty } \circ \gamma_{E_{i}} ( a_{1} ) =  \mathcal{V} [ L_{K} (\phi_{i,\infty}) ] \circ \gamma_{E_{i}} ( a_{1} ) \\
	&= \gamma_{ E } \circ \overline{M}_{\phi_{i, \infty } } ( a_{1} ) = \gamma_{ E } \circ \psi_{\overline{M} } \circ \mu_{i, \infty } ( a_{1} ) = \gamma_{E} \circ \psi_{\overline{M} } \circ  \psi_{ \overline{M} }^{-1} ( a ) = \gamma_{E}(a).
\end{align*}
We have just proved the claim.  Since $\gamma_{E} = \psi_{ \mathcal{V} } \circ \psi \circ \psi_{ \overline{M} }^{-1}$ is the composition of monoid isomorphisms, $\gamma_{E}$ is a monoid isomorphism.
\end{proof}

\def\cprime{$'$}

\end{document}